\title{\LARGE \bf
Analysis of a continuous opinion and discrete action dynamics model coupled with an external observation dynamics$^*$\thanks{$^*$This work was partially supported by ANR through the grant NICETWEET No. ANR-20-CE48-0009 and the PNRR project DECIDE No. 760069.}
}
\author{A. Couthures$^1$\thanks{$^1$ Universit\'e de Lorraine, CNRS, CRAN, F-54000 Nancy, France. }, T. Mongaillard$^1$, V.S. Varma$^{1,2}$\thanks{$^2$associated with Automation Department, Technical University of Cluj-Napoca, Memorandumului 28, 400114 Cluj-Napoca, Romania.}, S. Lasaulce$^{3,1}$\thanks{$^3$ associated with Khalifa University, Abu Dhabi, UAE.}, I.C. Mor\u{a}rescu$^{1,2}$}
\newtheorem{definition}{Definition}
\newtheorem{proposition}{Proposition}
\newtheorem{lemma}{Lemma}
\newtheorem{corollary}{Corollary}
\newtheorem{assumption}{Standing Assumption}
\newtheorem{remark}{Remark}
\newcommand{\mc}{\mathcal}
\newcommand{\Vcal}{\mc{V}}
\newcommand{\Gcal}{\mc{G}}
\newcommand{\Ecal}{\mc{E}}
\newcommand{\SetN}[1]{\mc{N}_{#1}}
\newcommand{\SetAction}{\left\{ -1, 1\right\}}
\newcommand{\opinion}[1]{\theta_{#1}}
\newcommand{\opinionv}{\bm{\theta}}
\newcommand{\opiniont}[1]{\theta_{#1}(k)}
\newcommand{\opiniontplusun}[1]{\theta_{#1}(k+1)}
\newcommand{\action}[1]{q_{#1}}
\newcommand{\actiont}[1]{q_{#1}(k)}
\newcommand{\actiontplusun}[1]{q_{#1}(k+1)}
\newcommand{\state}{p}
\newcommand{\statet}{\state(k)}
\newcommand{\statetplusun}{\state(k+1)}
\newcommand{\statethreshold}{\bar{\state}}
\newcommand{\emissiont}[1]{e_{#1}(k)}
\newcommand{\emin}{e_\text{min}}
\newcommand{\emax}{e_\text{max}}
\newcommand{\SetNs}[2]{\SetN{#1}^{#2}}
\newcommand{\SetNst}[2]{\SetN{#1}^{#2}(k) }
\newcommand{\npos}[1]{n_{#1}^{+}}
\newcommand{\npost}[1]{n_{#1}^{+}(k)}
\newcommand{\nneg}[1]{n_{#1}^{-}}
\newcommand{\nnegt}[1]{n_{#1}^{-}(k)}
\newcommand{\discretset}[2]{ \left\{#1, \dots, #2 \right\}}
\newcommand{\Vset}{\discretset{1}{N}}
\newcommand{\sign}[1]{\text{sgn}\left(#1\right)}
\newcommand{\opistate}[1]{\(\boldsymbol{\theta}(#1)\)}
\newcommand{\N}{\mathbb{N}}
\newcommand{\Q}{\mathbb{Q}}
\newcommand{\R}{\mathbb{R}}
\begin{document}

\maketitle
\thispagestyle{empty}
\pagestyle{empty}

\begin{abstract}
We consider a set of consumers in a city or town (who thus generate pollution) whose opinion is governed by a continuous opinion and discrete action (CODA) dynamics model. This dynamics is coupled with an observation signal dynamics, which defines the information the consumers have access to regarding the common pollution. We show that the external observation signal has a significant impact on the asymptotic behavior of the CODA model. When the coupling is strong, it induces either a chaotic behavior or convergence towards a limit cycle. When the coupling is weak, a more classical behavior characterized by local agreements in polarized clusters is observed. In both cases, conditions under which clusters of consumers don't change their actions are provided.Numerical examples are provided to illustrate the derived analytical results.
\end{abstract}

\section{Introduction}

Opinion dynamics (OD) over social networks attracted a lot of attention during the last decades. Multi-agent systems have provided an efficient way to model opinion evolution under social interactions. The existing OD models consider that the opinions evolve either in a discrete set \cite{IsingThesis1924,CliffordSudbury1973,Sznajd2000,VCM2020} or in a continuous set of values \cite{Deffuant2000,krause2002,Friedkin,MG10}. While some models naturally lead to consensus \cite{DeGroot,Friedkin} some others yield a network clustering \cite{Deffuant2000,krause2002,MG10,Altafini}. However, all the models enumerated above consider that each individual has access to the opinion values of the neighbors. In order to more accurately describe the opinion dynamics and to recover more realistic behaviors, a mix of continuous opinion with discrete actions (CODA) was proposed in \cite{Martins2008}. This model reflects the fact that even if we often face binary choices or actions that are visible to our neighbors, our opinion evolves in a continuous space of values that are not accessible. A consensus-like dynamics reproducing this behavior has been proposed and analyzed in \cite{NilCoSa2016} where the preservation and the propagation of actions are also characterized through the notion of robust polarized clusters. While the model in \cite{NilCoSa2016} led to a clustering of the network, a similar idea was employed in \cite{CeragioliFrascaECC2015} to study the emergence of consensus under quantized all-to-all communication. 

In this paper we analyze the behavior of the CODA model introduced in \cite{NilCoSa2016}  coupled with an external dynamics. 
Many models have been developed to characterize the pollution dynamic in urban areas, considering the fluid dynamics approach \cite{memonEffectsBuildingAspect2010}, chemistry-based approach \cite{berkowiczOSPMParameterisedStreet2000}, or both \cite{bakerStudyDispersionTransport2004}. Even if the time constants depend on the chemical compound considered \cite{gargComprehensiveStudyImpact2021}, we introduce a simple linear pollution model to estimate the local air quality.
In this model, the pollution level depends on the actions of the individuals which in turn are influenced both by the actions of their neighbors and the pollution level. The coupling of the two dynamics leads to a complex asymptotic behavior that can be summarized as follows. When the coupling between the dynamics is weak, one recovers the asymptotic behavior of the original CODA model in \cite{NilCoSa2016}. A strong coupling between the two dynamics hampers the convergence towards a steady state and yields either chaotic oscillations or convergence towards a limit cycle. It is noteworthy that even in the simplified case when all the agents have the same initial opinion, the strong coupling with the external dynamics hampers the convergence toward a steady state and may lead to chaotic oscillations.

The main contributions of this paper are: i) the introduction of a mathematical model capturing the coupling between the CODA dynamics and an external one; ii) the analysis of the asymptotic behavior of the aforementioned model; iii) and the characterization of the coupling strength leading to different asymptotic behaviors.

The paper is structured as follows. Section \ref{sec:problem_form} presents the definitions of the measures that constitute the model. Characteristics of opinion equilibrium and asymptotic behavior are analyzed in Section \ref{sec:analysis}, followed by a focus on the synchronized behavior in Section \ref{sec:synchro_behav}. Section \ref{sec:num_results} illustrates the different behaviors with numerical simulations. Finally, Section \ref{sec:ccl} concludes our work.

\section{Problem formulation and preliminaries}\label{sec:problem_form}

\subsection*{Model description}

We consider the classical multi-agent modeling in which $n$ individuals/agents belong to the set $\Vcal = \Vset$ and interact according to a fixed graph $\Gcal = \left(\Vcal, \Ecal\right)$ that can be directed or not. The neighborhood of the agent $i$ is denoted by $\SetN{i}$ and represents the set of agents that influence $i$ according to the graph $\Gcal$ (i.e $j \in \SetN{i} \Leftrightarrow (j,i) \in \Ecal$). We also denote by $n_{i}$ the cardinality of $\SetN{i}$. We assign to each agent $i \in \Vcal$ an opinion $\opinion{i} \in \left[-1,1\right]$ that evolves in time according to a discrete time protocol defined further in \eqref{eq:opinion_dynamic}. Let $\opiniont{i}$ be the opinion of the agent $i \in \Vcal$ at time $k$ and $\opiniont{}$ the opinion of all individuals at time $k$. Let us also introduce the action value $\actiont{i} \in \SetAction$ as a quantized version of $\opiniont{i}$ defined by
\begin{equation*}
	\action{i}(k) = 
	\begin{cases} 
		1 \hspace{-2mm}& \text{if } (\opiniont{i} > 0) \lor (\opiniont{i} = 0 \wedge \action{i}(k-1)=1)\\
		-1 \hspace{-2mm}& \text{if } (\opiniont{i} < 0)\lor (\opiniont{i} = 0 \wedge \action{i}(k-1)=-1) 
	\end{cases}
\end{equation*} 

We assume that the action of individual $i$ at time $k$ generates an emission $\emissiont{i} \in \left[\emin, \emax\right] \subset \R$ where $\emin$ and $\emax$ are the minimum and maximum emissions, respectively. The emission is given by the following equation 

\begin{equation}\label{eq:emissions}
	\emissiont{i} = \begin{cases}
\emin \mbox{ if } \action{i}(k) =-1\\
\emax \mbox{ if } \action{i}(k) =1
\end{cases}.
\end{equation}

We add an external state $\state \in \R$, referred to as pollution, that captures the environment state under the emission of everyone. The pollution evolves according to the following discrete-time dynamics:

\begin{equation}\label{eq:state_dynamic}
	\statetplusun = \gamma \statet + \sum_{i=1}^{N} \emissiont{i},
\end{equation} 
where $\gamma \in (0, 1)$ is an autonomous decay rate.

We assume that individuals cannot observe $\statet$ but they can sense a quantized value $\actiont{p} \in \SetAction$. Let us define $\actiont{p}$ as a function of a threshold $\statethreshold \in \R$ as follows: 

\begin{equation*}
	\actiont{p} = 	\begin{cases} 
		-1 \hspace{-2mm}& \text{if } (\statet> \statethreshold)\lor (\statet= \statethreshold \wedge q_p(k-1)=-1) \\
		1 \hspace{-2mm}& \text{if } (\statet < \statethreshold)\lor (\statet= \statethreshold \wedge q_p(k-1)=1).
	\end{cases}
\end{equation*}

We are now ready to describe the opinion dynamics model that we consider in this work. This dynamics adapts the CODA model in \cite{NilCoSa2016} to include the external dynamics of $\statet$:

\begin{equation}\label{eq:opinion_dynamic}
	\begin{split}
		\opiniontplusun{i} ={}&  \opiniont{i} + \left(1 - \opiniont{i}^2 \right) \big[\beta\left( \actiont{p} - \opiniont{i} \right)\\
		&+ \left(1-\beta\right)\frac{1}{n_{i}} \sum_{j \in \SetN{i}}\left( \actiont{j} - \opiniont{i} \right) \big],
	\end{split}
\end{equation}

where $0 \leq \beta \leq 1$ encapsulates the trade-off between the environment state observed through $\actiont{p}$ and the opinions of the neighbors. 
We note that the complete model coupling CODA and the external dynamics is described by \eqref{eq:emissions}-\eqref{eq:opinion_dynamic}.

We emphasize a natural partition of $\Vcal$ in two subsets $\SetNst{}{-} = \left\{i \in \Vcal \mid \actiont{i} = -1\right\}$ and $\SetNst{}{+} = \left\{i \in \Vcal \mid \actiont{i} = 1\right\}$. In the following, we denote by $n^{-}(k)$ and $n^{+}(k)$ the cardinality of $\SetNst{}{-}$ and $\SetNst{}{+}$, respectively. Similarly, for an agent $i$ we denote by $\SetNst{i}{-} = \SetN{i} \cap \SetNst{}{-}$ and $\SetNst{i}{+} = \SetN{i} \cap \SetNst{}{+}$ and by $\npos{i}$ and $\nneg{i}$ the cardinalities of these sets. 

\section{Analysis of the model}\label{sec:analysis}

Before starting the analysis of the model introduced in the previous section, let us observe that extreme opinion values $\opinion{i}(0)\in\{-1,1\}$ do not evolve in time. We also observe that the definitions of $q$ and $q_p$ are rigorous only if $\opinion{i}(0)\neq 0, \forall i\in\Vcal$ and $p(0)\neq\statethreshold$. Therefore, the following assumption is perfectly justified by our setup.  
\begin{assumption}\label{assumption:not_on_border}
	For all $i \in \Vcal$, $\opinion{i}(0) \in \left( -1, 1\right)\setminus\{0\}$ and $p(0)\neq\statethreshold$.  
\end{assumption}

\subsection{Characterization of opinion equilibria}

In the following, we analyze the asymptotic behavior of opinions that follows the dynamics \eqref{eq:opinion_dynamic}. In other words, we assume that the external signal has an exogenous decoupled evolution. 

To simplify our further reasoning, we introduce the following notation 
\begin{equation}\label{eq:f_i}
	f_i(k) =  \left(1 - \beta\right) \frac{\npost{i} - \nnegt{i}}{n_i} + \beta \actiont{p}.
\end{equation}

\begin{lemma}\label{lemma:monotonicity}
	Let $i \in \Vcal$, $\opinion{i}(0) \in \left(-1, 1\right)$. Then for all $k \in \N$, one of the following relation holds
		\begin{equation}\label{eq:lower_inequality}
		\opiniont{i} < \opiniontplusun{i} < f_i(k),
	\end{equation}
	\begin{equation}\label{eq:upper_inequality}
		\opiniont{i} > \opiniontplusun{i} > f_i(k),
	\end{equation}
	or, 
	\begin{equation}\label{eq:equality}
		\opiniont{i} = \opiniontplusun{i} = f_i(k).
	\end{equation}

\end{lemma}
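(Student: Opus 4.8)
The plan is to rewrite the opinion update \eqref{eq:opinion_dynamic} in a form that isolates the quantity $f_i(k)$. Observe that the bracketed term in \eqref{eq:opinion_dynamic} is
\[
\beta\left(\actiont{p}-\opiniont{i}\right)+\left(1-\beta\right)\frac{1}{n_i}\sum_{j\in\SetN{i}}\left(\actiont{j}-\opiniont{i}\right)
= f_i(k)-\opiniont{i},
\]
since $\sum_{j\in\SetN{i}}\actiont{j} = \npost{i}-\nnegt{i}$ and $\beta + (1-\beta) = 1$. Hence \eqref{eq:opinion_dynamic} becomes
\[
\opiniontplusun{i} = \opiniont{i} + \left(1-\opiniont{i}^2\right)\left(f_i(k)-\opiniont{i}\right),
\]
so that
\[
\opiniontplusun{i}-\opiniont{i} = \left(1-\opiniont{i}^2\right)\left(f_i(k)-\opiniont{i}\right).
\]
This already gives the sign of $\opiniontplusun{i}-\opiniont{i}$: under Standing Assumption~\ref{assumption:not_on_border} and the observation that the endpoints $\pm1$ are fixed, an easy induction shows $\opiniont{i}\in(-1,1)$ for all $k$, so $1-\opiniont{i}^2>0$, and therefore $\opiniontplusun{i}-\opiniont{i}$ has the same sign as $f_i(k)-\opiniont{i}$. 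This disposes of the left-hand inequalities in \eqref{eq:lower_inequality}--\eqref{eq:equality}: $\opiniont{i}<\opiniontplusun{i}$ iff $\opiniont{i}<f_i(k)$, and similarly for the other two cases.

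It remains to compare $\opiniontplusun{i}$ with $f_i(k)$. Writing $x=\opiniont{i}$ and $a=f_i(k)$, we have $\opiniontplusun{i}-f_i(k) = x + (1-x^2)(a-x) - a = (a-x)\big(1-x(1+x)\big) = (a-x)\cdot\big(-(x^2+x-1)\big)$. Hmm — this factorization needs care; a cleaner route is to compute $\opiniontplusun{i}-a = (x-a) + (1-x^2)(a-x) = (x-a)\big(1-(1-x^2)\big) = (x-a)x^2$. Wait, that's not right either since $(1-x^2)(a-x) = -(1-x^2)(x-a)$, giving $\opiniontplusun{i}-a = (x-a)\big(1-(1-x^2)\big) = (x-a)x^2$. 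So indeed $\opiniontplusun{i}-f_i(k) = \big(\opiniont{i}-f_i(k)\big)\opiniont{i}^2$. The key subtlety is the factor $\opiniont{i}^2$: it is nonnegative, and it is strictly positive precisely when $\opiniont{i}\neq0$. So if $\opiniont{i}\neq 0$, then $\opiniontplusun{i}-f_i(k)$ has the same sign as $\opiniont{i}-f_i(k)$, which combined with the previous paragraph yields exactly the chains \eqref{eq:lower_inequality}, \eqref{eq:upper_inequality}, or \eqref{eq:equality} according to whether $\opiniont{i}<f_i(k)$, $\opiniont{i}>f_i(k)$, or $\opiniont{i}=f_i(k)$.

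The main obstacle — and the only place real care is needed — is the case $\opiniont{i}=0$ for some $k\ge1$ (it cannot happen at $k=0$ by Assumption~\ref{assumption:not_on_border}). Then $\opiniontplusun{i}-f_i(k)=0$, i.e. $\opiniontplusun{i}=f_i(k)$, while $\opiniont{i}=0$ may differ from $f_i(k)$, so none of the three displayed chains holds verbatim. I expect this is handled by noting that $f_i(k)\in\{-1+2m/n_i : \dots\}$-type values together with the quantization rules force $f_i(k)$ away from $0$ only in degenerate configurations, or — more likely — the intended reading is that the lemma's hypothesis implicitly propagates $\opiniont{i}\neq0$; indeed from $\opiniontplusun{i}=\opiniont{i}+(1-\opiniont{i}^2)(f_i(k)-\opiniont{i})$ one checks that $\opiniont{i}\in(-1,1)\setminus\{0\}$ and $f_i(k)\in[-1,1]$ give $\opiniontplusun{i}\in(-1,1)$, and $\opiniontplusun{i}=0$ would require $f_i(k)=\opiniont{i}/(\opiniont{i}^2-1)\cdot(-1)\dots$; I would verify that this forces $f_i(k)$ outside $[-1,1]$ unless $\opiniont{i}=0$, so in fact $\opiniont{i}\neq0$ is preserved for all $k$ under Assumption~\ref{assumption:not_on_border}, closing the argument. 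Thus the proof is: (1) algebraic rewriting to get $\opiniontplusun{i}-\opiniont{i}=(1-\opiniont{i}^2)(f_i(k)-\opiniont{i})$ and $\opiniontplusun{i}-f_i(k)=\opiniont{i}^2(\opiniont{i}-f_i(k))$; (2) induction showing $\opiniont{i}\in(-1,1)\setminus\{0\}$ for all $k$; (3) read off the three sign cases.
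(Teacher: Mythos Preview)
Your approach is essentially the paper's: rewrite \eqref{eq:opinion_dynamic} as $\opiniontplusun{i} = \opiniont{i} + (1-\opiniont{i}^2)(f_i(k)-\opiniont{i})$ and read off the three sign cases. Your explicit identity $\opiniontplusun{i}-f_i(k) = \opiniont{i}^{\,2}\bigl(\opiniont{i}-f_i(k)\bigr)$ is a clean addition; the paper simply asserts that the comparison with $f_i(k)$ is ``straightforward'' without isolating this factor.

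You correctly spot the edge case $\opiniont{i}=0$, which the paper does not address at all, but your proposed resolution does not work: the property $\opiniont{i}\neq0$ is \emph{not} preserved by the dynamics. Setting $\opiniontplusun{i}=0$ forces $f_i(k) = -\opiniont{i}^{\,3}/(1-\opiniont{i}^{\,2})$, and for any $\opiniont{i}$ with $|\opiniont{i}|$ below roughly $0.755$ this value lies in $[-1,1]$. Concretely, $\opiniont{i}=1/2$ and $f_i(k)=-1/6$ (achievable e.g.\ with $\beta=1/6$, $\actiont{p}=-1$, $\npost{i}=\nnegt{i}$) give $\opiniontplusun{i}=0$; at the next step one then has $\theta_i(k+2)=f_i(k+1)$ exactly, so if $f_i(k+1)\neq0$ none of the three displayed chains \eqref{eq:lower_inequality}--\eqref{eq:equality} holds verbatim. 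This is really a gap in the lemma as stated rather than in your reasoning, and the paper's proof carries the same hole; in practice the lemma is only ever used for its non-strict consequences (monotone movement of $\opiniont{i}$ toward $f_i(k)$ without overshoot), which your two identities deliver regardless of whether $\opiniont{i}$ vanishes.
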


\begin{proof}
	Let us first observe that  $\sum_{j \in \SetN{i}}^{} \actiont{j} = \npost{i} - \nnegt{i} = 2\npost{i} - n_{i}$, since $\npost{i} + \nnegt{i} = n_{i}$ for all $k\in \N$. 
 Then, using \eqref{eq:f_i}, one rewrites \eqref{eq:opinion_dynamic} as:
	\begin{align}
			\opiniontplusun{i} ={}& \opiniont{i} + \left(1 - \opiniont{i}^2\right) \big[ \beta \left(\actiont{p} - \opiniont{i}\right)\nonumber\\
							& {} + \left(1 - \beta\right) \frac{ 1}{n_{i}}\left(\npost{i} - \nnegt{i} - n_{i}\opiniont{i}\right)  \big]\nonumber\\
							={} & \opiniont{i} + \left(1 - \opiniont{i}^2\right) \big[ f_i(k) - \opiniont{i}   \big] \label{eq:opinion_dynamic_rewritte}
	\end{align}
	We continue our reasoning by induction. From equation \eqref{eq:opinion_dynamic_rewritte} it is straightforward that if $\opiniont{t} < f_i(k)$ then $\opiniont{i} < \opiniontplusun{i}$ and $\opiniontplusun{i}<f_i(k)$. Reversely, if $\opiniont{i} > f_i(k)$ then $\opiniontplusun{i} > \opiniont{i}$ and $\opiniontplusun{i}>f_i(k)$. Finally, if $\opiniont{i} = f_i(k)$ then $\opiniontplusun{i} = f_i(k)$.

\end{proof}

\begin{proposition}
	Let $i\in \Vcal$ and assume that Assumption \ref{assumption:not_on_border} holds. If $\left(\actiont{p}\right)_{k\geq 0}$ and $\left(\npost{i}\right)_{k\geq 0}$ are stationary sequences with limit $\action{p}^*$ and $\npos{i}{}^*$, respectively. Then the sequence of opinion $\left(\opiniont{i}\right)_{k\geq 0}$ converges to 
	\begin{equation*}
		\opinion{i}^* = \lim_{k \to \infty} \opiniont{i} = \left(1 - \beta\right) \frac{2 \npos{i}{}^* - n_{i}}{n_{i}} +  \beta\action{p}^* \in \Q + \beta \Q
	\end{equation*}
\end{proposition}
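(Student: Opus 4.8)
The plan is to notice that, after a finite transient, the opinion of agent $i$ obeys an autonomous scalar recursion with a frozen right-hand side, and then to combine the monotonicity of Lemma~\ref{lemma:monotonicity} with a fixed-point argument to identify the limit. Since $(\actiont{p})_{k\ge 0}$ and $(\npost{i})_{k\ge 0}$ are stationary, there is $K\in\N$ with $\actiont{p}=\action{p}^*$ and $\npost{i}=\npos{i}{}^*$ for all $k\ge K$; using $\npost{i}+\nnegt{i}=n_i$ and \eqref{eq:f_i}, this gives $f_i(k)=f_i^*:=(1-\beta)\frac{2\npos{i}{}^*-n_i}{n_i}+\beta\action{p}^*$ for every $k\ge K$, and this constant is precisely the claimed value $\opinion{i}^*$. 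By \eqref{eq:opinion_dynamic_rewritte}, for $k\ge K$ the dynamics reduces to $\opiniontplusun{i}=\opiniont{i}+(1-\opiniont{i}^2)(f_i^*-\opiniont{i})$.

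Next I would establish that $\opiniont{i}\in(-1,1)$ for all $k$ by induction from Assumption~\ref{assumption:not_on_border}: if $\opiniont{i}\in(-1,1)$, then $f_i(k)\in[-1,1]$ as a convex combination of $\frac{\npost{i}-\nnegt{i}}{n_i}\in[-1,1]$ and $\actiont{p}\in\{-1,1\}$, and Lemma~\ref{lemma:monotonicity} places $\opiniontplusun{i}$ between $\opiniont{i}$ and $f_i(k)$ (strictly, unless they already coincide), hence in $(-1,1)$. Applying Lemma~\ref{lemma:monotonicity} from time $K$ with the now-constant $f_i^*$, the sequence $(\opiniont{i})_{k\ge K}$ is monotone (nondecreasing if $\opinion{i}(K)<f_i^*$, nonincreasing if $\opinion{i}(K)>f_i^*$, constant if $\opinion{i}(K)=f_i^*$) and stays between $\opinion{i}(K)$ and $f_i^*$; being monotone and bounded, it converges to some $L$ lying in the closed interval with endpoints $\opinion{i}(K)\in(-1,1)$ and $f_i^*\in[-1,1]$.

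Passing to the limit in the frozen recursion yields $(1-L^2)(f_i^*-L)=0$, so $L\in\{-1,1,f_i^*\}$. To discard the spurious roots, observe that since $L$ lies in the interval with endpoints $\opinion{i}(K)$ and $f_i^*$ while $\opinion{i}(K)\in(-1,1)$, having $L=1$ forces $f_i^*=1$ and having $L=-1$ forces $f_i^*=-1$; in either case $L=f_i^*$. Hence $L=f_i^*=\opinion{i}^*$. The rationality claim then follows by writing $\opinion{i}^*=\frac{2\npos{i}{}^*-n_i}{n_i}+\beta\left(\action{p}^*-\frac{2\npos{i}{}^*-n_i}{n_i}\right)$ with $\npos{i}{}^*,n_i\in\ZZ$ and $\action{p}^*\in\{-1,1\}$, so that $\opinion{i}^*\in\Q+\beta\Q$.

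I expect the only delicate point to be the exclusion of the boundary equilibria $\pm 1$ of the frozen map: a priori the monotone limit could sit at $\pm1$, and ruling this out requires both that the iterates never leave $(-1,1)$ and the side on which Lemma~\ref{lemma:monotonicity} confines them relative to $f_i^*$, the sole exception being the degenerate case $f_i^*=\pm1$ in which the boundary value already coincides with $\opinion{i}^*$.
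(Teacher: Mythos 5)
Your proposal is correct and follows essentially the same route as the paper: freeze $f_i(k)$ at its stationary value $f_i^*$, invoke Lemma~\ref{lemma:monotonicity} to get a bounded monotone (hence convergent) sequence trapped between $\opinion{i}(K)$ and $f_i^*$, pass to the limit in \eqref{eq:opinion_dynamic_rewritte}, and rule out the spurious boundary roots $\pm1$ by noting they can only occur when $f_i^*=\pm1$. Your treatment of the boundary-exclusion step is in fact slightly more explicit than the paper's, but the argument is the same.
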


\begin{proof}

Let us note $f_i^* := \left(1 - \beta\right) \left(2 \npos{i}{}^* - n_{i}\right)/n_i + \beta \action{p}^* \in \left[-1, 1\right]$ and remarks that $\npos{i}{}^*$ and $\action{p}^* \in \N$ since those are stationary sequences in $\N$. Then $f_i^* \in \Q + \beta \Q$.
With Assumption \ref{assumption:not_on_border}, we have $\opinion{i}(0) \in \left(-1, 1\right)$, so that Lemma \ref{lemma:monotonicity} applies. Then by induction 
\begin{equation*}
	\forall k \geq k^*, \quad \opiniont{i} \leq \opiniontplusun{i} \leq f_i^*\leq 1 
\end{equation*}
or
\begin{equation*}
	\forall k \geq k^*, \quad \opiniont{i} \geq \opiniontplusun{i} \geq f_i^* \geq 1.
\end{equation*}
Then since $\left(\opiniont{i}\right)_{k\geq 0}$ is a bounded monotonous sequence, it converges. Let denote $\opinion{i}^*$ that limit. Now from above inequalities, if $\opinion{i}^* = 1 $ then $f_i^* = 1$. Conversely, if $\opinion{i}^* = -1$, we have that $f_i^* = -1$. Finally, if $\opinion{i}^* \in \left(-1, 1\right)$ then \eqref{eq:opinion_dynamic_rewritte} rewrite as 
\begin{equation*}
	\frac{\opiniontplusun{i} - \opiniont{i}}{\left(1 - \opiniont{i}^2\right)} + \opiniont{i} = f_i^*.
\end{equation*}
Taking the limit of the previous cancel the first term of the left-hand side and we have that $\opinion{i}^* = f_i^* $.
\end{proof}

We will later see that opinions can either have an oscillatory chaotic behavior or they converge to a limit cycle. For a discrete-time system given by \(x(k+1) = H(x(k))\), if there is a natural number \(m > 1\) for which there exist \(m\) successive convergent sub-sequences \(x_0(k) = x(mk)\), \(x_1(k)=x(mk +1)\), ..., \(x_{m-1}(k) = x(m(k+1) - 1)\), then the overall sequence converges to a limit cycle of length $m$ defined by the limits of the $m$ sub-sequences. 

\begin{corollary}
	Under Assumption \ref{assumption:not_on_border}, if $\left(\actiont{p}\right)_{k\geq 0}$ is a stationary sequence and $\left(\npost{i}\right)_{k\geq 0}$ converges to a limit cycle of length $m$, then $\opinion{i}$ also converges to a limit cycle of length $m$ denoted $\bar{\opinion{i}} \subset \Q + \beta \Q +\dots + \beta^m \Q$. 
\end{corollary}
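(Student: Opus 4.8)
The plan is to reduce the statement to a scalar recursion with an eventually periodic input and then analyse that recursion. Since $(\npost{i})_{k\ge 0}$ takes values in the finite set $\discretset{0}{n_i}$ and converges to a limit cycle of length $m$, it is eventually $m$-periodic; together with $(\actiont{p})_{k\ge 0}$ being eventually constant, equal to some $\action{p}^\star\in\SetAction$, this makes $f_i(k)$ from \eqref{eq:f_i} eventually $m$-periodic. Hence there exist $k_0\in\N$ and values $f_i^{(0)},\dots,f_i^{(m-1)}\in[-1,1]$, each of the form $\tfrac{2(\npos{i})^{(r)}-n_i}{n_i}+\beta\big(\action{p}^\star-\tfrac{2(\npos{i})^{(r)}-n_i}{n_i}\big)\in\Q+\beta\Q$, such that $f_i(k)=f_i^{(r)}$ whenever $k\ge k_0$ and $k\equiv k_0+r\pmod m$. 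Writing $H^{(r)}(\theta):=\theta+(1-\theta^2)(f_i^{(r)}-\theta)=f_i^{(r)}(1-\theta^2)+\theta^3$, equation \eqref{eq:opinion_dynamic_rewritte} reads $\opiniontplusun{i}=H^{(r)}(\opiniont{i})$ for $k\ge k_0$, $r\equiv k-k_0\pmod m$.

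Next I would confine the trajectory. Put $a:=\min_r f_i^{(r)}$ and $b:=\max_r f_i^{(r)}$. Under Assumption~\ref{assumption:not_on_border}, Lemma~\ref{lemma:monotonicity} applies and shows that $\opiniontplusun{i}$ always lies (weakly) between $\opiniont{i}$ and $f_i(k)\in[a,b]$; hence $[a,b]$ is forward invariant for $k\ge k_0$ and each $H^{(r)}$ maps $[a,b]$ into itself. A short argument with the same lemma shows that the trajectory eventually enters $[a,b]$: if $\opiniont{i}\notin[a,b]$ for all $k\ge k_0$ then $(\opiniont{i})$ is strictly monotone and bounded, hence converges to some $\ell$ with $(1-\ell^2)(f_i^{(r)}-\ell)=0$ for all $r$, so $\ell\in\{-1,1\}$ or all $f_i^{(r)}$ coincide (the latter being exactly the previous Proposition) — in either case $(\opiniont{i})$ already converges to a, possibly degenerate, limit cycle of length $m$ lying in $\Q+\beta\Q$. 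So one may assume $\opiniont{i}\in[a,b]$ for all $k\ge k_1$, some $k_1\ge k_0$.

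For each residue $r\in\discretset{0}{m-1}$, the sub-sequence $\vartheta_r(k):=\opinion{i}(mk+r+k_1)$ obeys $\vartheta_r(k+1)=\phi_r(\vartheta_r(k))$, where $\phi_r:=H^{(r-1)}\circ\cdots\circ H^{(r+1)}\circ H^{(r)}$ (indices mod $m$) is a continuous self-map of $[a,b]$; by the notion of convergence to a limit cycle recalled just before the statement, it remains to show that the iterates of each $\phi_r$ converge. \emph{This is the step I expect to be the main obstacle.} The plan is to exploit the confinement $\abs{\opiniont{i}}\le\max(\abs a,\abs b)$ and the factorization $H^{(r)}(\theta)-H^{(r)}(\theta')=(\theta-\theta')\big[\theta^2+\theta\theta'+\theta'^2-f_i^{(r)}(\theta+\theta')\big]$, tracking the sign and magnitude of the bracketed factor along the nested images $\phi_r^{j}([a,b])$ so as to show either that $\phi_r$ is a contraction near its fixed set, or that $(\vartheta_r(k))_k$ is eventually monotone; here one genuinely has to use the one-dimensional structure of $\phi_r$ together with the fact that every admissible input satisfies $f_i^{(r)}\in[\beta\action{p}^\star-(1-\beta),\,\beta\action{p}^\star+(1-\beta)]$. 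Granting this, $\vartheta_r(k)\to\bar\theta_i^{(r)}$ for each $r$, so $(\opiniont{i})$ converges to the length-$m$ limit cycle $\bar{\opinion{i}}=\{\bar\theta_i^{(0)},\dots,\bar\theta_i^{(m-1)}\}$.

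Finally, to identify the cycle algebraically, I would pass to the limit along residue classes in the expanded recursion $\opiniontplusun{i}=f_i(k)(1-\opiniont{i}^2)+\opiniont{i}^3$, obtaining $\bar\theta_i^{(r+1)}=f_i^{(r)}\big(1-(\bar\theta_i^{(r)})^2\big)+(\bar\theta_i^{(r)})^3$ for every $r$ (indices mod $m$). Since each $f_i^{(r)}\in\Q+\beta\Q$, one then argues, as in the proof of the previous Proposition but carried through the $m$ relations above, that every $\bar\theta_i^{(r)}$ — and hence the whole cycle $\bar{\opinion{i}}$ — lies in $\Q+\beta\Q+\dots+\beta^m\Q$, which completes the proof.
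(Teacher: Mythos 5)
The paper states this corollary without proof, so I can only measure your proposal against what a complete argument would require; by that measure it has two genuine gaps. The first is the one you flag yourself: convergence of the iterates of each composed map $\phi_r=H^{(r-1)}\circ\cdots\circ H^{(r)}$. This is not a technicality to be ``granted'' --- it is the entire content of the corollary. Each $H^{(r)}(\theta)=f_i^{(r)}(1-\theta^2)+\theta^3$ has derivative $\theta\,(3\theta-2f_i^{(r)})$, which changes sign on $[-1,1]$ and exceeds $1$ in modulus away from the origin, so $\phi_r$ is in general neither monotone nor a contraction on the invariant interval, and iterates of such one-dimensional maps need not converge (the paper itself documents chaotic behaviour of closely related compositions when the forcing is not stationary, so no soft ``it is a bounded scalar recursion'' argument is available). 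Your confinement step and the reduction to subsequences are fine and match what the $m=1$ Proposition does, but the actual convergence mechanism --- eventual monotonicity of each $\vartheta_r$, or a contraction estimate on $\phi_r$ restricted to $[a,b]$ --- is exactly where the work lies, and it is absent.

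The second gap is the concluding algebraic step, which as written would fail. In the stationary Proposition, $\theta_i^*=f_i^*$ follows because $\theta_i(k+1)-\theta_i(k)\to 0$ while the forcing is constant, so the increment term cancels in the limit. On a nontrivial cycle the consecutive differences $\bar\theta_i^{(r+1)}-\bar\theta_i^{(r)}$ do not vanish; passing to the limit only yields the cyclic system $\bar\theta_i^{(r+1)}=f_i^{(r)}\bigl(1-(\bar\theta_i^{(r)})^2\bigr)+(\bar\theta_i^{(r)})^3$. Eliminating variables gives a polynomial equation for $\bar\theta_i^{(0)}$ of degree up to $3^m$ over $\Q(\beta)$, whose roots are in general algebraic irrationals and not elements of the $\Q$-linear span of $1,\beta,\dots,\beta^m$. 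So ``arguing as in the previous Proposition'' does not deliver $\bar{\opinion{i}}\subset\Q+\beta\Q+\dots+\beta^m\Q$; establishing that membership (if it holds at all in the sense the authors intend) requires a different mechanism than the fixed-point relations you write down, and your proof should at least acknowledge that the analogy with the stationary case breaks precisely here.
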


\subsection{Asymptotic behavior of the external/pollution dynamics}

As pointed out in the previous subsection, the stationarity of $\left(\actiont{p}\right)_{k\geq 0}$ plays a major role in the asymptotic behavior of the opinions $\opinionv$. Consequently, in this subsection we provide a sufficient condition ensuring that $\left(\actiont{p}\right)_{k\geq 0}$ is a stationary sequence. In this setting, the opinions behave as in the CODA model provided in \cite{NilCoSa2016} since the players are influenced by the external dynamics uniformly with respect to time after the sequences become stationary. We can rewrite the dynamics \eqref{eq:state_dynamic} by injecting \eqref{eq:emissions}
\begin{equation}\label{dyn_p}
	\statetplusun = \gamma \statet + \npost{}\emax +\nnegt{}\emin
\end{equation} 

Therefore, the pollution $p$ reaches an equilibrium only if $(\npost{})_{k\ge0}$ (and implicitly $(\nnegt{})_{k\ge0}$) is stationary. Let us suppose that 
\[
\lim_{k\to\infty}\npost{}=n^+, \quad \lim_{k\to\infty}\nnegt{}=n^-
\]In this case the equilibrium $p^*$ is given by
\begin{equation}\label{eq:equilibria_state}
	p^* = \frac{n^+\emax +n^-\emin}{1 - \gamma},
\end{equation}
which now only depends on the partition of actions of the individuals in the social network. 

In order to guarantee that $\left(\actiont{p}\right)_{k\geq 0}$ is stationary one needs to ensure that $\left(\sign{\statet - \statethreshold}\right)_{k \geq 0}$ is stationary. In other words, for $k$ sufficiently large the value of $\statet$ does not cross the threshold $\statethreshold$.   
\begin{lemma}
The sequence $\left(\actiont{p}\right)_{k\geq 0}$ is stationary for any graph $\Gcal$ with $N$ individuals if there exists $k$ such that either $\Big(\statet\le\frac{N\emax}{1-\gamma} \wedge \frac{N\emax}{1-\gamma}\le\statethreshold\Big)$ or
$\Big(\statet\ge\frac{N\emin}{1-\gamma} \wedge \frac{N\emax}{1-\gamma}\ge\statethreshold\Big)$.
\end{lemma}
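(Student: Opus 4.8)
The plan is to exploit that the affine pollution recursion \eqref{dyn_p} has a uniformly bounded forcing term, so that two half-lines are left forward-invariant by the dynamics \emph{regardless of the graph $\Gcal$ and of the opinion configuration}. Each of the two hypotheses then pins $\statet$ inside one such half-line which moreover lies entirely on one side of the threshold $\statethreshold$; the only remaining work is to reconcile this with the hysteresis (tie-breaking) convention in the definition of $\actiont{p}$.

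Concretely, I would first rewrite \eqref{dyn_p} as $\statetplusun=\gamma\,\statet+S(k)$ with $S(k):=\npost{}\emax+\nnegt{}\emin$, and observe that $\npost{}+\nnegt{}=N$ together with $\emin\le\emax$ gives $N\emin\le S(k)\le N\emax$ for every $k$ — this uniform bound is exactly what makes the conclusion hold for \emph{any} graph. Next I would establish forward-invariance: if $\statet\le\tfrac{N\emax}{1-\gamma}$, then $\statetplusun=\gamma\,\statet+S(k)\le\tfrac{\gamma N\emax}{1-\gamma}+N\emax=\tfrac{N\emax}{1-\gamma}$; symmetrically $\statet\ge\tfrac{N\emin}{1-\gamma}$ forces $\statetplusun\ge\tfrac{N\emin}{1-\gamma}$. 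Running the same one-line estimate with a \emph{strict} inequality yields the companion fact needed to dispose of the boundary case: since $\tfrac{N\emax}{1-\gamma}\le\statethreshold$ under the first hypothesis, $\statet<\statethreshold$ implies $\statetplusun<\statethreshold$ — and symmetrically in the mirror case, with endpoint $\tfrac{N\emin}{1-\gamma}$ and the inequalities reversed.

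Finally I would assemble the two cases. Under the first hypothesis there is a $k$ with $\statet\le\tfrac{N\emax}{1-\gamma}\le\statethreshold$; forward-invariance of $(-\infty,\tfrac{N\emax}{1-\gamma}]$ then gives $p(k')\le\statethreshold$ for all $k'\ge k$, so $\sign{p(k')-\statethreshold}$ is never positive on this tail. Two sub-cases remain: either $p(k')<\statethreshold$ for some (hence, by strict propagation, every later) $k'$, so that $\actiont{p}=1$ from that point on; or $p(k')=\statethreshold$ for every $k'\ge k$, in which case the tie-breaking rule freezes $\actiont{p}$ at the value $q_p(k-1)$. In both sub-cases $\left(\actiont{p}\right)_{k\ge 0}$ is eventually constant, i.e.\ stationary. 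The second hypothesis is handled by the mirror argument with the forward-invariant half-line $[\tfrac{N\emin}{1-\gamma},+\infty)$, yielding $\actiont{p}\equiv-1$ (or a frozen value) on a tail. I expect the only genuinely delicate point to be this degenerate equality $\statet=\statethreshold$, which cannot be settled from the recursion alone and must go through the hysteresis convention; everything else reduces to the elementary invariance estimate above.
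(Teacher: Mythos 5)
Your argument is correct and complete, and in fact the paper states this lemma without any proof, so there is nothing to compare it against except the statement itself. The three ingredients you use are exactly the right ones: the uniform bound $N\emin\le S(k)\le N\emax$ independent of the graph and of the opinions, the forward invariance of the half-line $\bigl(-\infty,\tfrac{N\emax}{1-\gamma}\bigr]$ (resp.\ $\bigl[\tfrac{N\emin}{1-\gamma},+\infty\bigr)$) under $\statetplusun=\gamma\statet+S(k)$, and the strict propagation $\statet<\statethreshold\Rightarrow\statetplusun<\statethreshold$ which, combined with the hysteresis convention at $\statet=\statethreshold$, shows that $\actiont{p}$ can switch at most once on the tail and is therefore eventually constant. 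One point you should make explicit rather than fix silently: your mirror argument for the second case needs $\tfrac{N\emin}{1-\gamma}\ge\statethreshold$, whereas the lemma as printed requires only $\tfrac{N\emax}{1-\gamma}\ge\statethreshold$. The printed condition is too weak to be true --- it is satisfied by the paper's own chaotic simulation ($N=20$, $\emin=0$, $\emax=1$, $\gamma=0.5$, $\statethreshold=15$, so $\state_{\min}=0<\statethreshold<40=\state_{\max}$), where $\actiont{p}$ is manifestly not stationary --- so $\emax$ must be a typo for $\emin$ in the second conjunct, consistent with the later remark that no equilibrium exists when $\state_{\min}<\statethreshold<\state_{\max}$. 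With that correction stated, your proof stands as written.
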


\subsection{Preservation of action}
In the following, we investigate under which condition we have that $\actiont{i} = \actiontplusun{i}$. The following result is instrumental for our purposes.

\begin{lemma}\label{lemma:stab_action}
	Let $i\in \Vcal$, then the following statements hold true:
 \begin{enumerate}
\item if $f_i(k) \geq 0 $ and $\actiont{i} = 1$ then $\actiontplusun{i}=1$,\\			
\item if $f_i(k) \le 0 $ and $\actiont{i} = -1$ then $\actiontplusun{i}=-1$.
\end{enumerate}
\end{lemma}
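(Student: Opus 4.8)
The plan is to exploit the monotonicity dichotomy of Lemma~\ref{lemma:monotonicity} together with the sign information carried by $f_i(k)$. Consider the first statement: assume $f_i(k)\ge 0$ and $\actiont{i}=1$. By the definition of the action map, $\actiont{i}=1$ forces $\opiniont{i}>0$ (the only way $\action{i}(k)=1$ with $\opiniont{i}\le 0$ would be $\opiniont{i}=0$, but Assumption~\ref{assumption:not_on_border} together with the fact that an opinion equal to $0$ cannot be reached will be argued to be excluded, or more simply we carry the hypothesis $\opiniont{i}>0$). So I would first record that under Assumption~\ref{assumption:not_on_border} and the standing hypothesis, $\actiont{i}=1 \Rightarrow \opiniont{i}>0$.

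Next I apply Lemma~\ref{lemma:monotonicity} at time $k$. Exactly one of \eqref{eq:lower_inequality}, \eqref{eq:upper_inequality}, \eqref{eq:equality} holds. In case \eqref{eq:upper_inequality} we have $\opiniontplusun{i}>f_i(k)\ge 0$, hence $\opiniontplusun{i}>0$ and therefore $\actiontplusun{i}=1$. In case \eqref{eq:equality} we have $\opiniontplusun{i}=\opiniont{i}>0$, so again $\actiontplusun{i}=1$. The only delicate case is \eqref{eq:lower_inequality}, where $\opiniont{i}<\opiniontplusun{i}<f_i(k)$: here the upper bound $f_i(k)$ is useless, but the lower bound gives $\opiniontplusun{i}>\opiniont{i}>0$, so once more $\opiniontplusun{i}>0$ and $\actiontplusun{i}=1$. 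In all three cases $\opiniontplusun{i}>0$, which yields $\actiontplusun{i}=1$. The second statement is symmetric: $\actiont{i}=-1$ gives $\opiniont{i}<0$, and in each of the three cases of Lemma~\ref{lemma:monotonicity} — using $f_i(k)\le 0$ in the ``$>$'' case and the strict decrease or equality otherwise — one concludes $\opiniontplusun{i}<0$, hence $\actiontplusun{i}=-1$.

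The main (and only) obstacle is the bookkeeping around the boundary value $\opinion{}=0$: one must be sure that $\actiont{i}=1$ genuinely implies $\opiniont{i}>0$ rather than $\opiniont{i}=0$ with a past action of $1$. I would handle this by noting that, by Lemma~\ref{lemma:monotonicity} applied inductively from $k=0$ with $\opinion{i}(0)\neq 0$, the opinion can equal $0$ only if $f_i(k-1)=0$ and $\opiniont{i-1}... $ — more cleanly, if $\opiniont{i}=0$ then from \eqref{eq:opinion_dynamic_rewritte} at step $k$ the update is $\opiniontplusun{i}=f_i(k)$, and one simply observes that the conclusion $\actiontplusun{i}=1$ still follows whenever $f_i(k)\ge 0$ by the tie-breaking rule in the definition of $q_i$. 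So the statement can in fact be proved without excluding $\opiniont{i}=0$, by a direct case split on the sign of $\opiniont{i}$ and, when $\opiniont{i}=0$, on the value of $\action{i}(k-1)$ combined with $\opiniontplusun{i}=f_i(k)$; this is the version I would write up, as it keeps the lemma self-contained.
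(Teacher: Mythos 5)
Your proof is correct and follows essentially the same route as the paper: invoke the trichotomy of Lemma~\ref{lemma:monotonicity} and, case by case, conclude from either the monotonicity of $\opinion{i}$ or the bound by $f_i(k)$ that the sign (and hence the action, via the tie-breaking rule) is preserved. Your extra bookkeeping around $\opiniont{i}=0$ is a harmless refinement of what the paper leaves implicit.
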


\begin{proof}
As proven in Lemma \ref{lemma:monotonicity} one of \eqref{eq:lower_inequality}, \eqref{eq:upper_inequality} or \eqref{eq:equality} holds true.\\
1) If \eqref{eq:lower_inequality} is verified one has $\opiniontplusun{i} > \opiniont{i}$ meaning that $\actiont{i} = 1$ implies $\actiontplusun{i}=1$.\\
If \eqref{eq:upper_inequality} or \eqref{eq:equality} holds, then $\opiniont{i} \geq f_i(k)$. Consequently, $f_i(k) \geq 0$ ensures $\opiniontplusun{i} \geq 0$ or equivalently $\actiontplusun{i}=1$.\\ 
2) If \eqref{eq:lower_inequality} or \eqref{eq:equality} holds,  then $\opiniont{i} \leq f_i(k)$. Therefore, if $f_i(k) < 0$ yields $\opiniontplusun{i}<0\Leftrightarrow \actiontplusun{i}=-1$.\\
If \eqref{eq:upper_inequality} holds, one has that $\opiniontplusun{i} < \opiniont{i}$ meaning that $\actiont{i} = -1$ implies $\actiontplusun{i}=-1$.
\end{proof}

When $\beta <1/(1 + n_i)$ Lemma \ref{lemma:stab_action} can be refined as follows. 
\begin{lemma}
	Let $i\in \Vcal$ and assume that $\beta <1/(1 + n_i)$. The following statements hold:\\
	$-$ if $\npost{i} > \nnegt{i} $ and $\actiont{i} = 1$ then, $\actiontplusun{i}=1$,\\			
	$-$ if $\nnegt{i} < \npost{i}$ and $\actiont{i} = -1$ then, $\actiontplusun{i}=-1$.
\end{lemma}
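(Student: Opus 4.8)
The plan is to reduce the statement directly to Lemma \ref{lemma:stab_action}: I would show that the strict majority hypothesis on the neighborhood actions, together with $\beta < 1/(1+n_i)$, forces $f_i(k)$ to have the correct sign \emph{uniformly} in the (a priori unknown) value of $\actiont{p}$, which then lets the earlier lemma do the rest.

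First I would recall from \eqref{eq:f_i} that $f_i(k) = (1-\beta)\frac{\npost{i}-\nnegt{i}}{n_i} + \beta\,\actiont{p}$, together with the identity $\npost{i}+\nnegt{i} = n_i$ established in the proof of Lemma \ref{lemma:monotonicity}. The one step that actually matters is the integrality observation: $\npost{i}-\nnegt{i}$ is an integer with the sign of the majority, so $\npost{i} > \nnegt{i}$ already implies $\npost{i}-\nnegt{i} \geq 1$, and symmetrically $\nnegt{i} > \npost{i}$ implies $\npost{i}-\nnegt{i} \leq -1$.

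In the first case, bounding $\actiont{p} \geq -1$ gives
\[
f_i(k) \;\geq\; \frac{1-\beta}{n_i} - \beta \;=\; \frac{1-\beta(1+n_i)}{n_i},
\]
which is strictly positive exactly under the hypothesis $\beta < 1/(1+n_i)$; in particular $f_i(k) \geq 0$, so Lemma \ref{lemma:stab_action}(1) yields $\actiontplusun{i}=1$ when $\actiont{i}=1$. The second case is the mirror image: bounding $\actiont{p} \leq 1$ gives $f_i(k) \leq -\frac{1-\beta(1+n_i)}{n_i} < 0$, hence $f_i(k) \leq 0$, and Lemma \ref{lemma:stab_action}(2) applies.

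I do not expect a genuine obstacle; the only delicate point is the integrality remark $\abs{\npost{i}-\nnegt{i}} \geq 1$, since it is precisely what converts the strict inequality $\beta < 1/(1+n_i)$ into the margin needed to dominate the $\beta\,\actiont{p}$ term — the argument would break if one only knew a majority over the reals. I would also note in passing that the hypothesis of the second bullet should read $\nnegt{i} > \npost{i}$ for its conclusion to follow, matching the symmetric computation above.
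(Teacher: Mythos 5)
Your proof is correct and follows essentially the same route as the paper's: both arguments hinge on the integrality of $\npost{i}-\nnegt{i}$ (so a strict majority gives a margin of at least $1$) and on $\beta<1/(1+n_i)$ bounding the $\beta\actiont{p}$ perturbation below that margin, before invoking Lemma \ref{lemma:stab_action}. You are also right that the second bullet's hypothesis contains a typo and should read $\nnegt{i}>\npost{i}$.
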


\begin{proof}
Notice that $\beta < 1 / (1+n_i)$ implies $\displaystyle\frac{\beta}{1-\beta} < \displaystyle\frac{1}{n_i}$. Recalling that $|\actiont{p}|=1$ one obtains that $\left| \displaystyle\frac{n_i \actiont{p}\beta}{1 - \beta}\right|<1$. Notice also that  
	\begin{align*}
		f_i(k) &= \left(1 - \beta\right) \frac{\npost{i} - \nnegt{i}}{n_i} + \beta \actiont{p} \\
		& =\frac{1-\beta}{n_i} \left(\npost{i} - \nnegt{i} +\frac{n_i \actiont{p}\beta}{1 - \beta}\right) 
	\end{align*}
Therefore 
\[
\sign {f_i(k)}=\sign{\npost{i} - \nnegt{i} +\frac{n_i \actiont{p}\beta}{1 - \beta}},
\] and taking into account that $\npost{i}, \nnegt{i}\in\N$ the desired result yields from Lemma \ref{lemma:stab_action}.
\end{proof}

Throughout the paper, we denote by $|A|$ the cardinality of a set $A$. We provide a definition for some cluster in the graph such that the opinion will not change through time 

\begin{definition}
	We say that a subset of agents $A \subset \Vcal$ is a weakly robust polarized cluster if the following hold:
	\begin{itemize}
		\item $\forall i, j \in A$, $\action{i}(0) = \action{j}(0)$,
		\item $\forall i \in A$, $| \SetN{i} \cap A | \geq | \SetN{i} \setminus A |-  \beta / (1-\beta)|\SetN{i}|$. 
	\end{itemize}
\end{definition}

\begin{proposition}
    \label{proposition:weakly}
	If $A$ is a weakly robust polarized cluster 
 and $\actiont{p} = \action{i}(0)$ for all $k\in \N$ and $i\in A$, then
$$\actiont{i} = \action{i}(0), \ \forall i \in A, \forall k \in \N.$$ 
\end{proposition}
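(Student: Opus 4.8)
The plan is to proceed by induction on $k$, showing simultaneously that for every $i \in A$ we have $\actiont{i} = \action{i}(0)$ and that the ``sign of the forcing term'' $f_i(k)$ has the correct sign to maintain this action via Lemma \ref{lemma:stab_action}. The base case $k=0$ is immediate. For the inductive step, assume $\action{j}(k) = \action{j}(0)$ for all $j \in A$; without loss of generality suppose the common initial action is $+1$ (the case $-1$ is symmetric). By hypothesis $\actiont{p} = 1$ as well. The task is then to show $f_i(k) \geq 0$ for each $i \in A$, so that Lemma \ref{lemma:stab_action}(1) gives $\actiontplusun{i} = 1 = \action{i}(0)$, closing the induction.

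The heart of the argument is the estimate on $f_i(k) = (1-\beta)\frac{\npost{i}-\nnegt{i}}{n_i} + \beta\actiont{p}$. Under the induction hypothesis, every neighbor of $i$ lying in $A$ currently has action $+1$, so $\SetN{i}\cap A \subseteq \SetNst{i}{+}$, hence $\npost{i} \geq |\SetN{i}\cap A|$. The neighbors not in $A$ contribute at worst $-1$ each, so $\nnegt{i} \leq |\SetN{i}\setminus A|$. Therefore
\begin{equation*}
\npost{i} - \nnegt{i} \geq |\SetN{i}\cap A| - |\SetN{i}\setminus A| \geq -\frac{\beta}{1-\beta}|\SetN{i}| = -\frac{\beta}{1-\beta} n_i,
\end{equation*}
where the second inequality is precisely the defining property of a weakly robust polarized cluster. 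Substituting, $f_i(k) \geq (1-\beta)\cdot\left(-\frac{\beta}{1-\beta}\right) + \beta\cdot 1 = -\beta + \beta = 0$, as required. (One should note the case $\beta = 1$ separately, where $\beta/(1-\beta)$ is undefined; there $f_i(k) = \actiont{p} = 1 \geq 0$ directly, and the cluster condition is vacuous.)

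I expect the main subtlety, rather than a genuine obstacle, to be the bookkeeping of which neighbors are guaranteed to have which action: only neighbors \emph{inside} $A$ are controlled by the induction hypothesis, and one must resist assuming anything about neighbors outside $A$ beyond the trivial bound $\actiont{j} \geq -1$. The inequality $\npost{i} \geq |\SetN{i}\cap A|$ requires $\actiont{j} = 1$ for $j \in \SetN{i}\cap A$, which is exactly the inductive hypothesis applied to those agents, so the induction must carry the statement for \emph{all} of $A$ at each step, not just for a fixed $i$. With that structure in place the computation above is routine, and the symmetric case $\action{i}(0) = -1$ uses Lemma \ref{lemma:stab_action}(2) with the reversed inequalities in identical fashion.
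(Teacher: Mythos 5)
Your proof is correct and follows essentially the same route as the paper's: induction on $k$, using the inclusion $\SetN{i}\cap A \subseteq \SetNst{i}{+}$ together with the weak-cluster inequality to conclude $f_i(k)\geq 0$ (with $\actiont{p}=1$), and then invoking Lemma \ref{lemma:stab_action}. Your explicit treatment of the degenerate case $\beta=1$ is a small addition the paper omits, but the argument is otherwise the same.
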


\begin{proof}
	The proof will be done by induction. Let us suppose that $\forall i \in A$ one has $\action{i}(0) = 1$. Assume that for a given  $k' \in \N$ one has $\action{i}(k') = 1, \ \forall i \in A$. Since the interaction graph is fixed and $A$ is a weakly robust polarized cluster the following holds true $| \SetN{i} \cap A | \geq | \SetN{i} \setminus A | -  \beta / (1-\beta)|\SetN{i}|$. Noticing that $\SetN{i} \cap A \subseteq \SetNs{i}{+}(k')$ one obtains that $\npos{i}(k') \geq \nneg{i}(k') - n_i \beta / (1 - \beta)$ which is equivalent to $f(k') \geq 0$. Applying Lemma \ref{lemma:stab_action} one gets $\action{i}(k'+1) = 1$ and the induction is complete. Similar reasoning applies when $\action{i}(0) = -1,\ \forall i \in A$.
\end{proof}

The preservation of action in a weakly polarized cluster is subject to a constant value of $q_p$ over the time. In order to get rid of this constraint, we introduce the following concept.
\begin{definition}
	We say that a subset of agents $A \subset \Vcal$ is a strongly robust polarized cluster if the following hold:
	\begin{itemize}
		\item $\forall i, j \in A$, $\action{i}(0) = \action{j}(0)$,
		\item $\forall i \in A$, $| \SetN{i} \cap A | \geq | \SetN{i} \setminus A |+  \beta / (1-\beta)|\SetN{i}|$. 
	\end{itemize}
\end{definition}

\begin{proposition}
	If $A$ is a robust polarized cluster 
 then
  $\forall i \in A$, $\forall k \in \N$, $\actiont{i} = \action{i}(0)$. 
\end{proposition}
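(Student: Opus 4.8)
The plan is to mirror the proof of Proposition~\ref{proposition:weakly}, but to exploit the stronger inequality in the definition of a strongly robust polarized cluster so that the constant-$q_p$ hypothesis becomes unnecessary. The key observation is that the extra margin $+\beta/(1-\beta)|\SetN{i}|$ (as opposed to $-\beta/(1-\beta)|\SetN{i}|$ in the weakly robust case) is exactly what is needed to absorb the worst-case contribution of the environment term $\beta\actiont{p}$ in $f_i(k)$, regardless of whether $\actiont{p}=+1$ or $\actiont{p}=-1$.

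Concretely, I would argue by induction on $k$. Suppose first that $\action{i}(0)=1$ for all $i\in A$, and assume $\action{i}(k)=1$ for all $i\in A$ at some time $k\in\N$. Since $A$ is strongly robust polarized, $|\SetN{i}\cap A|\ge|\SetN{i}\setminus A|+\beta/(1-\beta)|\SetN{i}|$; because every agent in $\SetN{i}\cap A$ currently plays $+1$ we have $\npost{i}\ge|\SetN{i}\cap A|$, and because every agent in $\SetNst{i}{-}$ lies in $\SetN{i}\setminus A$ we have $\nnegt{i}\le|\SetN{i}\setminus A|$. Chaining these gives $\npost{i}-\nnegt{i}\ge \beta/(1-\beta)\,n_i$. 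Then, from the factored form of $f_i(k)$ used in the proof of the $\beta<1/(1+n_i)$ lemma,
\[
f_i(k)=\frac{1-\beta}{n_i}\Big(\npost{i}-\nnegt{i}+\frac{n_i\actiont{p}\beta}{1-\beta}\Big)\ge \frac{1-\beta}{n_i}\Big(\frac{\beta}{1-\beta}n_i-\frac{n_i\beta}{1-\beta}\Big)=0,
\]
where I used $\actiont{p}\ge-1$ to bound the environment term from below. Hence $f_i(k)\ge0$, and Lemma~\ref{lemma:stab_action} yields $\actiontplusun{i}=1$, closing the induction. The symmetric case $\action{i}(0)=-1$ for all $i\in A$ uses $\npost{i}\le|\SetN{i}\setminus A|$, $\nnegt{i}\ge|\SetN{i}\cap A|$, $\actiont{p}\le1$, and the second item of Lemma~\ref{lemma:stab_action} to conclude $f_i(k)\le0$ and $\actiontplusun{i}=-1$.

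There is essentially no hard analytic step here; the proof is a bookkeeping exercise once the right inequality is in hand. The one point that deserves care is the base case and the need for Assumption~\ref{assumption:not_on_border}: the action values $\action{i}(k)$ and $q_p(k)$ are only well defined because opinions never reach exactly $0$ and $p$ never hits $\statethreshold$ (or, if they do, the tie-breaking convention keeps the action constant), so that $\action{i}(0)$ is unambiguous and the monotonicity statement of Lemma~\ref{lemma:monotonicity} is applicable. I would also note explicitly that this result is genuinely stronger than Proposition~\ref{proposition:weakly}: a strongly robust polarized cluster is in particular weakly robust, but here no assumption whatsoever is placed on the trajectory of $q_p$, which is the whole point of introducing the stronger margin. (A minor editorial remark: the statement says ``robust polarized cluster'' where it should read ``strongly robust polarized cluster''.)
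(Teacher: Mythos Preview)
Your argument is correct and follows precisely the route the paper indicates: it reruns the induction of Proposition~\ref{proposition:weakly} with the stronger inequality $|\SetN{i}\cap A|\ge|\SetN{i}\setminus A|+\beta/(1-\beta)|\SetN{i}|$, which is exactly what allows the worst-case value of $\actiont{p}$ to be absorbed so that $f_i(k)$ keeps the required sign and Lemma~\ref{lemma:stab_action} applies. The paper's own proof is a one-line pointer to that same modification, so your write-up is simply a more explicit version of the intended argument.
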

\begin{proof}
    Apply proof of Proposition \ref{proposition:weakly} while considering \({| \SetN{i} \cap A | \geq | \SetN{i} \setminus A |+  \beta / (1-\beta)|\SetN{i}|}\).
\end{proof}

It is worth noting that for $\beta > 1/2$ we cannot have robust polarized cluster since $\beta/(1 - \beta) >1$ and  $| \SetN{i} \cap A | \leq  \beta / (1-\beta)|\SetN{i}|$ for any $A \subset \Vcal$. This fact will have importance in the following.

\section{Analysis of the synchronized behavior}
\label{sec:synchro_behav}
In the case where $\beta >1/2$ the state of the system does not converge towards a steady state. Instead, one has oscillations that may be either chaotic or converging to a limit cycle. This is illustrated in Fig. \ref{fig:bifurcationdiagrambetaopinionwithconsensus} where we can see that for $\beta$ close to one we get a limit cycle while for $\beta>1/2$, in a wide range, one has a chaotic behavior. For the sake of simplicity, we assume in the following that all the opinions are synchronized. 
\begin{figure}
	\centering
	\includegraphics[width=0.9\linewidth]{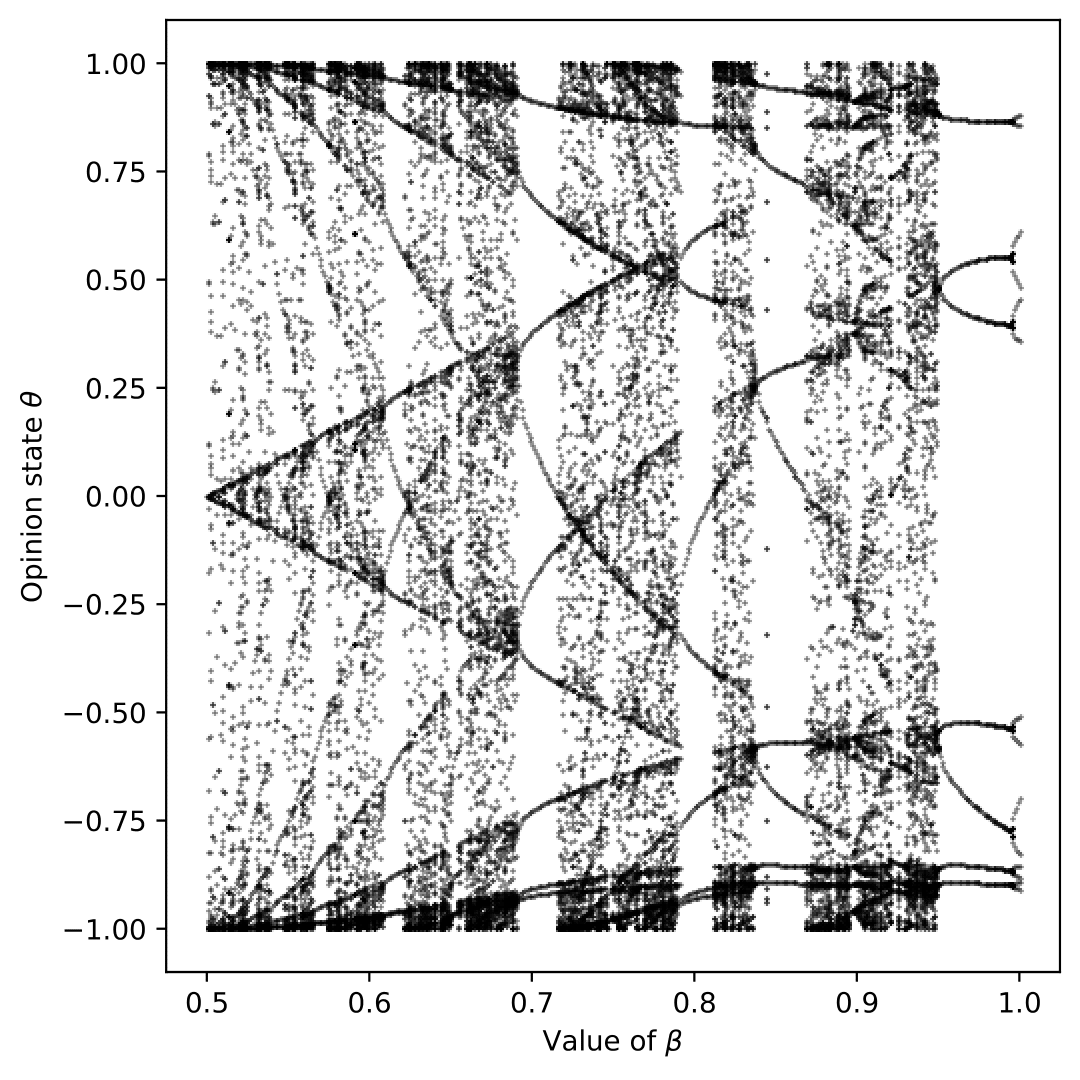}
	\caption{Bifurcation diagram of the opinion for $0.5<\beta<1$. $N=20$, $\opinion{}(0)= 0.4$, $\state(0)=100$, $\statethreshold = 15$, $\emin = 0$, $\emax =1$ and $\gamma = 0.5$,}
	\label{fig:bifurcationdiagrambetaopinionwithconsensus}
\end{figure}

\begin{definition}
	We say that an opinion state \(\boldsymbol{\theta} = \left( \theta_1 \cdots \theta_N \right)^\top\) is Fully Synchronized (FS) if $\forall i, j\in\Vcal$, $\theta_i = \theta_j$.
\end{definition}

When the opinion state is FS, the opinion of an agent $i\in \Vcal$ is equal to the opinion of any other agent in $\Vcal$. Therefore, in the remainder of this section, we will denote $\theta(k)$ and $q(k)$ the common opinion and action of all the agents at time $k$. In other words, we omit the agent index when referring to its opinion or action.

\begin{proposition}
    The FS property is forward invariant over time, i.e. if \opistate{k} is FS at time $k\in \N$, then \opistate{k+1} is FS.
\end{proposition}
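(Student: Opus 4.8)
The plan is to reduce the vector update \eqref{eq:opinion_dynamic} to a scalar one under the FS hypothesis and to observe that the reduced update carries no dependence on the agent index. First I would use that $\boldsymbol{\theta}(k)$ is FS to write $\opiniont{i} = \theta(k)$ for every $i \in \Vcal$, and note that consequently all the quantized actions coincide, $\actiont{i} = q(k)$ for all $i$: when $\theta(k) \neq 0$ this is immediate from the sign rule defining $q$, and the degenerate value $\theta(k) = 0$ plays no role in what follows since there the common previous action (which is common precisely because opinions have been synchronized) is used as tie-break. Likewise, $\actiont{p}$ is, by construction, the same for all agents.

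Next I would substitute these equalities into \eqref{eq:opinion_dynamic}. The key step is that the neighbourhood term collapses: since $q_j(k) = q(k)$ for every $j \in \SetN{i}$ and $\opiniont{i} = \theta(k)$,
\[
\frac{1}{n_i}\sum_{j\in\SetN{i}}\bigl(\actiont{j} - \opiniont{i}\bigr) = q(k) - \theta(k),
\]
which is independent of $i$ and, in particular, of the graph structure $\Gcal$. Hence \eqref{eq:opinion_dynamic} becomes, for every $i \in \Vcal$,
\[
\opiniontplusun{i} = \theta(k) + \bigl(1 - \theta(k)^2\bigr)\bigl[\beta\bigl(\actiont{p} - \theta(k)\bigr) + (1-\beta)\bigl(q(k) - \theta(k)\bigr)\bigr],
\]
whose right-hand side no longer depends on $i$. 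Therefore $\opiniontplusun{i} = \opiniontplusun{j}$ for all $i,j \in \Vcal$, i.e. $\boldsymbol{\theta}(k+1)$ is FS; iterating this one-step statement gives forward invariance for all subsequent times.

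I do not expect a genuine obstacle here: the argument is essentially the one-line algebraic collapse of the averaging term above, and the only delicate point is the well-posedness of the actions in the measure-zero situation $\theta(k) = 0$, which is dispatched by Assumption~\ref{assumption:not_on_border} together with the observation, already used, that synchronization of opinions propagates a common action history. As a by-product, the last display exhibits the scalar map $\theta(k+1) = H\bigl(\theta(k), \actiont{p}\bigr)$ that governs the synchronized regime and is the object studied in the remainder of Section~\ref{sec:synchro_behav}.
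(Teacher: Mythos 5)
Your argument is correct and is exactly the one the paper leaves implicit (the paper omits this proof entirely, but the subsequent Remark that $f(k)=(1-\beta)q(k)+\beta q_p(k)$ under FS rests on precisely the collapse $\frac{1}{n_i}\sum_{j\in\SetN{i}}(q_j(k)-\theta(k))=q(k)-\theta(k)$ that you carry out). The only point worth tightening is the tie-break at $\theta(k)=0$: Standing Assumption~1 constrains $\theta_i(0)$ only, so the claim that all $q_i(k)$ coincide there really relies on FS holding from time $0$ onward (which is the setting of Section~\ref{sec:synchro_behav}), not on the assumption itself.
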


\begin{remark}
	If \opistate{k} is FS then $f(k) = (1-\beta) \actiont{} + \beta \actiont{p}$. 
\end{remark}
Indeed, under the assumption of FS, one has that for all $i \in \Vcal$, $(\npost{i} - \nnegt{i})/n_i = \actiont{i}=\actiont{}$.

In the FS regime, the action space reduces to  
\begin{align*}
\mathcal{S} &= \left\{ (\action{}, \action{p}) \mid \action{} \in \{ -1, 1\}, \action{p} \in \{-1, 1 \}\right\} \\
&= \left\{ (-1,-1), (-1,1), (1,-1), (1,1)\right\}.
\end{align*}

Notice that each point in $\mathcal{S}$ corresponds to a partition of the state space in four sets. For instance $(1,1)$ corresponds to $\{\opiniont{i}\ge 0,\forall k\state\le \statethreshold\}$ In order to prove the oscillatory behavior of the system \eqref{eq:state_dynamic}-\eqref{eq:opinion_dynamic} we show that in general $\mathcal{S}$ does not contain equilibrium points. This means that the trajectory of the system cannot remain in a certain partition which means it cannot converge towards a steady state.
\begin{proposition}\label{proposition:not_equilibrium}
	Assume that $\beta > 1/2$ and the opinion state is FS at time $k\in \N$. Then the points $\left\{(1,-1), (1,-1)\right\}$ are not equilibrium in the action space.
\end{proposition}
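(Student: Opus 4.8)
The plan is to argue by contradiction, exploiting the identification (recalled just before the statement) of each point of $\mathcal{S}$ with a partition of the state space, so that a point being an ``equilibrium in the action space'' means exactly that this partition is forward invariant. Suppose one of the two mismatched points — say $(1,-1)$; note that the set $\{(1,-1),(1,-1)\}$ in the statement is a typo for $\{(1,-1),(-1,1)\}$ — were an equilibrium. Then there is a time $k$ from which the trajectory never leaves the associated partition, i.e.\ $q=1$ and $q_p=-1$ at every time $\ell\ge k$. Since the FS property is forward invariant, the opinion remains fully synchronized, so the common opinion is well defined, and by the Remark $f(\ell)=(1-\beta)q(\ell)+\beta q_p(\ell)=1-2\beta$ for all $\ell\ge k$; the hypothesis $\beta>1/2$ makes this strictly negative.

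Next I would plug this into Lemma~\ref{lemma:monotonicity}: from $q(k)=1$ one has $\theta(k)\ge 0>f(k)$, so \eqref{eq:upper_inequality} holds, and because $f$ stays equal to $1-2\beta$ forever it keeps holding — hence the common opinion is strictly decreasing for $\ell\ge k$ and bounded below by $1-2\beta$, so it converges. To locate the limit I would invoke the earlier Proposition on convergence of opinions: in the FS regime with common action $1$ one has $\npost{i}=n_i$ for every $i$, so both the sequence $(\npost{i})_k$ and the sequence $(\actiont{p})_k$ are (eventually) stationary, and the Proposition returns the limit $(1-\beta)\frac{2n_i-n_i}{n_i}+\beta(-1)=1-2\beta<0$. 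Therefore the common opinion eventually becomes negative, which forces $q=-1$ at some later time, contradicting $q=1$ for all $\ell\ge k$. The point $(-1,1)$ is handled by the symmetry $\theta\mapsto-\theta$, $q\mapsto-q$, $q_p\mapsto-q_p$, which turns $f$ into $2\beta-1>0$ and the common opinion into an increasing sequence with limit $2\beta-1>0$, hence eventually positive, again a contradiction.

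I do not expect a genuine obstacle: the argument is short and, notably, uses only the opinion dynamics \eqref{eq:opinion_dynamic} — the pollution dynamics plays no role in excluding these two points, which is precisely why the sign condition $\beta>1/2$ (it forces $f$ to have a definite sign, bounded away from $0$, whenever $q$ and $q_p$ disagree) is the right hypothesis. The only points needing a word of care are: making explicit that ``$(1,-1)$ is an equilibrium'' really does pin down $(q,q_p)$ for all later times (this is where forward invariance of FS enters); the borderline case $\theta(k)=0$, which is still fine because $0>1-2\beta$; and the extreme value $\beta=1$, where $f=-1$ and the convergence Proposition still returns the limit $-1<0$, leaving the conclusion unchanged.
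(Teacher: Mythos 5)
Your proof is correct, and it takes a genuinely different route from the paper's. The paper, after reducing to $f(k)=1-2\beta<0$ and $\theta(k)\ge 0$, tries to establish a \emph{uniform per-step decrement} $\theta(k+1)-\theta(k)=(1-\theta(k)^2)(1-2\beta-\theta(k))\le 1-2\beta$ and iterates it to force $\theta(k+n)\le n(1-2\beta)+1<0$ for large $n$. You instead combine Lemma~\ref{lemma:monotonicity} (the orbit is strictly decreasing and bounded below by $1-2\beta$, hence convergent) with the limit-identification Proposition (stationary $q_p\equiv -1$ and $\npos{i}\equiv n_i$ give limit $1-2\beta<0$), and conclude the opinion must eventually turn negative. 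Your route is not only valid but arguably safer: the paper's per-step bound is actually false for $\theta$ close to $1$, since the factor $1-\theta^2$ shrinks the negative quantity $1-2\beta-\theta$ toward $0$ (e.g.\ $\beta=0.6$, $\theta=0.95$ gives $(1-\theta^2)(1-2\beta-\theta)\approx -0.11 > -0.2 = 1-2\beta$), so the paper's linear iteration \eqref{iteration} does not follow as written; the monotone-convergence argument you use repairs exactly this step. Your side remarks are also all apposite: the set in the statement is indeed a typo for $\{(1,-1),(-1,1)\}$, forward invariance of FS is what makes the common opinion and the Remark's formula for $f$ available for all later times, the tie case $\theta(k)=0$ is harmless because $0>1-2\beta$, and the boundary case $\beta=1$ still yields a negative limit.
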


\begin{proof}
	We proceed by contradiction. The reasoning is similar for each of the two points so we will focus on the first one. Let us assume that $(1,-1)$ is an equilibrium point. This means that if there exists $k^*\in \N$ such that $\action{}(k^*)=1,\ \action{p}(k^*)=-1$ then for any $k> k^*$ we have $\actiont{} = 1$ and $\actiont{p} = -1$. We notice that in this case, one has $f(k) = 1-2\beta, \ \forall k> k^*$. Then the dynamics \eqref{eq:opinion_dynamic_rewritte} becomes
	\begin{align*}
		\opiniontplusun{} = \opiniont{} + \left(1 - \opiniont{}^2\right) \left(1 -2\beta - \opiniont{}\right), \ \forall k> k^*.
	\end{align*}
	Recalling that $\actiont{}=1,\ \forall k> k^*$ one deduces that $\opiniont{}\geq0, \ \forall k> k^*$. Consequently, $\forall k> k^*$ one obtains
	\begin{align*}
		\opiniontplusun{} - \opiniont{} = \left(1 - \opiniont{}^2\right) \left(1 -2\beta - \opiniont{}\right) \leq 1-2\beta.
	\end{align*}
	Iterating the inequality above for consecutive values of $k$ it results that for any $n\in\N$ the following holds: \begin{equation}
 \label{iteration}
	 \opinion{}(k+n) \leq n(1-2\beta) + \opiniont{}\leq n(1-2\beta) +1.   
	\end{equation}
 Let us recall that $1-2\beta <0$. Therefore, if we consider in \eqref{iteration} a sufficiently large $n$ (i.e. $n> 1/(2\beta-1)$) we get $\opinion{}(k+n) <0$ yielding$\action{}(k+n) = -1$ which contradicts the assumption that $(1,-1)$ is an equilibrium in the action space. 
\end{proof}

\begin{proposition}
	Assume $\beta > 1/2$ and \(\boldsymbol{\theta}\) being FS. The following statements hold true.
	\begin{itemize}
		\item $(1,1)$ is an equilibrium in the action space if and only if $\state_{\max} \le \statethreshold$ where $\state_{\max} = N\emax/(1 - \gamma)$. In this case $(\opinionv,\state)=(\mathbf{1}_n,\state_{\max})$ is a stable equilibrium of the coupled dynamics \eqref{eq:state_dynamic}-\eqref{eq:opinion_dynamic}.
		\item $(-1,-1)$ is an equilibrium in the action space if and only if $\state_{\min} \ge \statethreshold$ where $\state_{\min} = N \emin/(1 - \gamma)$. In this case $(\opinionv,\state)=(-\mathbf{1}_n,\state_{\min})$ is a stable equilibrium of the coupled dynamics \eqref{eq:state_dynamic}-\eqref{eq:opinion_dynamic}.
	\end{itemize}
\end{proposition}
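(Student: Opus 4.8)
The plan is to prove the two bullets symmetrically; I will spell out the $(1,1)$ case and observe that $(-1,-1)$ follows by the obvious sign reversal $\state\mapsto-\state$, $\statethreshold\mapsto-\statethreshold$, $\emin\leftrightarrow-\emax$. Throughout I work in the FS regime, so by the preceding Remark $f(k)=(1-\beta)\action{}(k)+\beta\action{p}(k)$, and the scalar dynamics is \eqref{eq:opinion_dynamic_rewritte} for the common opinion $\opiniont{}$ together with the scalar pollution recursion obtained from \eqref{dyn_p}, namely $\statetplusun=\gamma\statet+\npost{}\emax+\nnegt{}\emin$ with $\npost{}\in\{0,N\}$ under FS.

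First I would establish the ``only if'' direction by contrapositive. Suppose $(1,1)$ is an equilibrium in the action space, so there is $k^*$ with $\action{}(k)=1$ and $\action{p}(k)=1$ for all $k\ge k^*$. Then $\npost{}=N$ for all $k\ge k^*$, so \eqref{dyn_p} reduces to the affine scalar map $\statetplusun=\gamma\statet+N\emax$, whose unique fixed point is $\state_{\max}=N\emax/(1-\gamma)$ and which converges monotonically to it (since $\gamma\in(0,1)$). Hence $\statet\to\state_{\max}$. But $\action{p}(k)=1$ for all $k\ge k^*$ forces $\statet\le\statethreshold$ (equality only with the correct one-step memory) for all such $k$; passing to the limit gives $\state_{\max}\le\statethreshold$. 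This shows equilibrium $\Rightarrow \state_{\max}\le\statethreshold$.

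For the ``if'' direction, assume $\state_{\max}\le\statethreshold$ and exhibit the fixed point and verify it is an equilibrium of the action space and stable. At $(\opinionv,\state)=(\mathbf 1_n,\state_{\max})$ one checks directly that $\opiniontplusun{i}=1+(1-1^2)[\cdots]=1$ for every $i$, so $\opinionv\equiv\mathbf 1_n$ is fixed; and $\statetplusun=\gamma\state_{\max}+N\emax=\state_{\max}$, so the pair is a genuine fixed point of \eqref{eq:state_dynamic}--\eqref{eq:opinion_dynamic}. To see it is a \emph{stable} equilibrium and that the action pair $(1,1)$ is forward-invariant from a neighborhood, take any initial condition with $\opinion{i}(0)\in(0,1]$ for all $i$ (so $\action{i}(0)=1$, hence $\action{}(0)=1$ under FS) and with $\state(0)$ small enough that the pollution trajectory never exceeds $\statethreshold$. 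By Lemma~\ref{lemma:stab_action}, $\action{}(k)=1$ and $f(k)=(1-\beta)+\beta\ge0$ recursively keep $\action{}(k)=1$; meanwhile the pollution obeys $\statetplusun=\gamma\statet+N\emax$ as long as actions stay at $1$, so $\statet$ converges monotonically to $\state_{\max}\le\statethreshold$, which keeps $\action{p}(k)=1$ for all $k$. Thus $(1,1)$ is an equilibrium in the action space. Once actions are frozen at $1$, the Proposition in the ``Characterization of opinion equilibria'' subsection (with $\action{p}^*=1$, and $\npos{i}{}^*=n_i$ under FS) gives $\opiniont{i}\to1$; combined with $\statet\to\state_{\max}$ this yields (Lyapunov/asymptotic) stability of $(\mathbf 1_n,\state_{\max})$.

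The main obstacle I anticipate is the \emph{stability} claim rather than the equilibrium characterization: the action map is discontinuous at $\opinion{i}=0$ and at $\state=\statethreshold$, so ``stable equilibrium'' must be read with care near these switching surfaces. The clean argument is that on the relevant closed region $\{\opinion{i}\ge0\ \forall i\}\cap\{\state\le\statethreshold\}$ the coupled dynamics is exactly the smooth system $\opiniontplusun{i}=\opiniont{i}+(1-\opiniont{i}^2)(1-\opiniont{i})$, $\statetplusun=\gamma\statet+N\emax$, for which monotone convergence to $(\mathbf 1_n,\state_{\max})$ is immediate and this region is forward invariant; the only subtlety is checking that $\state(0)$ in a neighborhood of $\state_{\max}$ keeps the pollution below $\statethreshold$, which holds because $\state_{\max}\le\statethreshold$ and the scalar pollution map is a contraction toward $\state_{\max}$, so $\statet\le\max(\state(0),\state_{\max})$ for all $k$. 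I would state this forward-invariant region explicitly and note that strict inequality $\state_{\max}<\statethreshold$ gives an open neighborhood, while the boundary case $\state_{\max}=\statethreshold$ still works because the one-step memory in the definition of $\action{p}$ resolves ties in favor of $+1$.
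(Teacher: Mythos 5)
Your proof is correct and follows essentially the same route as the paper's: the ``only if'' direction via convergence of the affine pollution recursion $\statetplusun=\gamma\statet+N\emax$ to $\state_{\max}$ under the frozen action pair, and the ``if'' direction via one-step forward invariance of $(1,1)$ using $f(k)=1$ for the opinion and the convex-combination bound $\statetplusun\le\gamma\statethreshold+(1-\gamma)\statethreshold=\statethreshold$ for the pollution. Your additional care about the switching surfaces, the tie-breaking at $\state=\statethreshold$, and what ``stable'' means near the discontinuity goes slightly beyond what the paper spells out, but it does not change the argument.
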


\begin{proof}
The reasoning is symmetric and it is sufficient to focus only on the first statement.\\
"$\Rightarrow$" We assume that $(1,1)$ is an equilibrium and show that $\state_{\max} < \statethreshold$.\\
Suppose that there exists $k^*\in \N$ such that $\action{}(k^*)=1,\ \action{p}(k^*)=1$. Then, for all $k > k^*$, we have $\actiont{} = 1$ 
and $\actiont{p} = 1$. 
Therefore, for all $k > k^*$ one has $\statet \le \statethreshold$ and the dynamics \eqref{eq:state_dynamic} becomes
	\begin{equation}\label{particular state dyn}
		\statetplusun = \gamma \statet + N \emax.
	\end{equation}
Since $\gamma\in(0,1)$ the dynamics above is asymptotically stable and $\statet$ converges to the equilibrium defined in \eqref{eq:equilibria_state} with $n^+=N,\ n^-=0$ i.e. $\state^*=\state_{\max} = N  \emax/(1 - \gamma)$. We conclude that $\state_{\max} \le\statethreshold$.
	
"$\Leftarrow$"	We assume that $\state_{\max} \le \statethreshold$ and show that $(1,1)$ is an equilibrium.\\
One proves that if $k$ is such that $\actiont{}=1,\ \action{p}(k)=1$ then $\actiontplusun{} =1,\ \action{p}(k+1)=1$.
Notice first that in the case under consideration $f(k)=1$. Therefore, dynamics \eqref{eq:opinion_dynamic} becomes
\begin{equation}\label{13}
    \opiniontplusun{} =\opiniont{}+(1-\opiniont{} ^2)(1-\opiniont{})\ge\opiniont{}.
\end{equation}
Since $\actiont{}=1$ one deduces from \eqref{13} that $\actiontplusun{}=1$ as well.\\
On the other hand $\action{p}(k)=1$ implies $\statet{}<\statethreshold$	and $\state_{\max} \le \statethreshold$ is equivalent to $N\emax\le(1-\gamma)\statethreshold$. Therefore, \eqref{particular state dyn} becomes
\[
\statetplusun<\gamma\statethreshold+(1-\gamma)\statethreshold=\statethreshold
\] yielding $\actiontplusun{}=1$.
By recursive reasoning one gets that $(1,1)$ is an equilibrium point.

We already noticed that in the case under study $\state_{\max}$ is an asymptotically stable point for \eqref{eq:state_dynamic} that takes the particular form \eqref{particular state dyn}. On the other hand the dynamics \eqref{eq:opinion_dynamic} simplifies as \eqref{13} whose stable equilibrium is $\mathbf{1}_n$.
\end{proof}

In the general case when $\state_{min}<\statethreshold<\state_{\max}$  neither $(1,1)$ nor $(-1,-1)$ is an equilibrium in the action space. Since no equilibrium exists in this case, the trajectory of \eqref{eq:state_dynamic}-\eqref{eq:opinion_dynamic} will switch an infinite number of times between the four sets of the partition defined by the action space $\mathcal{S}$. 

\section{Numerical results}\label{sec:num_results}

First, we will illustrate the same kind of result as in \cite{NilCoSa2016} over a square lattice. Finally, we will present the different behaviors we can observe for a complete graph with FS property. 
\subsection{Square lattice}
\begin{figure}
    \centering
    \includegraphics[width=0.8\linewidth]{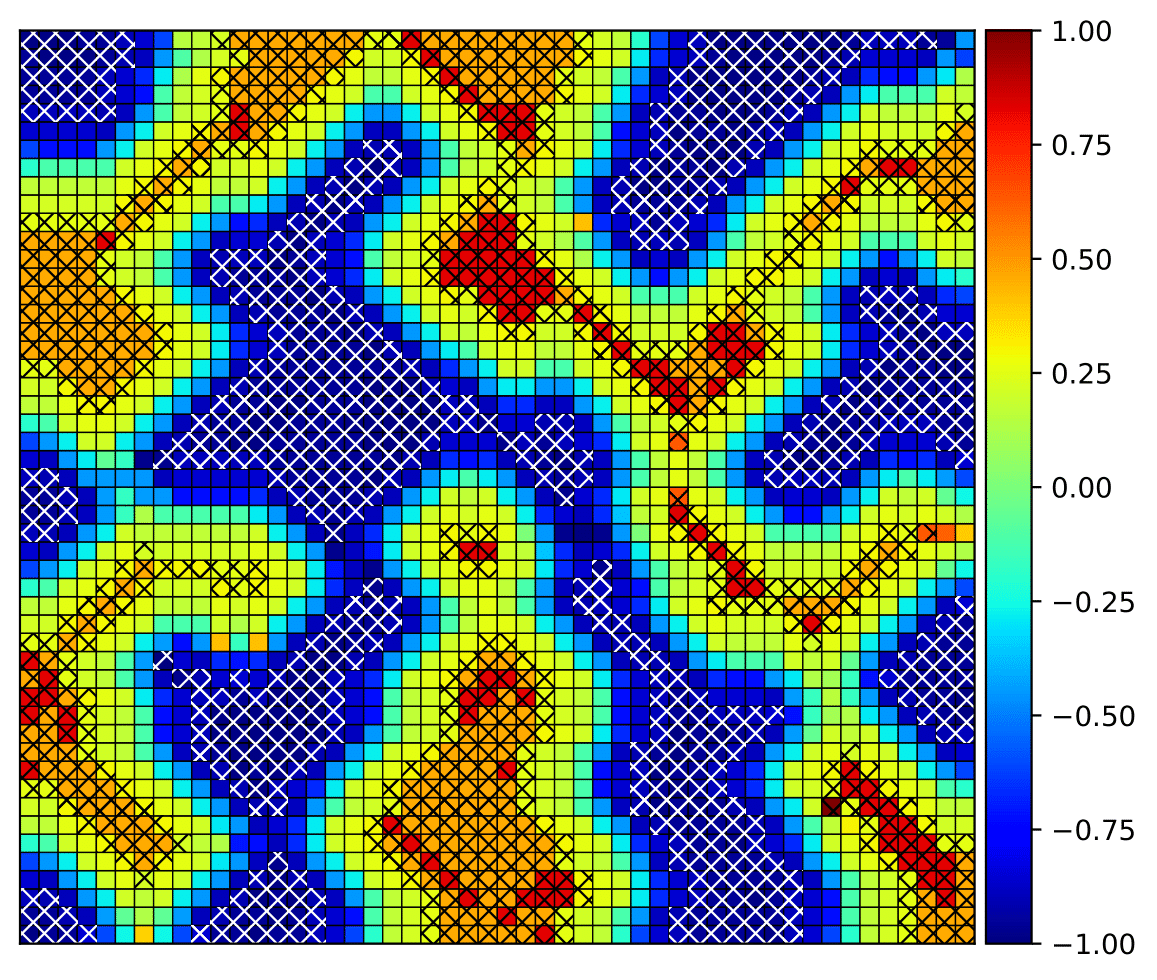}
    \caption{Visualization of Opinion Dynamics on a 50 × 50 Square Lattice for $\beta = 0.45$: Initial opinions are randomly distributed as i.i.d. uniform variables between -1 and 1, with the resultant opinions after 100 iterations represented by each colored square cell. Agents engage in communication with their adjacent cells (above, below, left, and right). The cells marked with crosses indicate the presence of strongly robust polarized clusters; black crosses correspond to action -1, and white crosses denote action 1.}
    \label{fig:beta_inf_05}
\end{figure}

\begin{figure}
    \centering
    \includegraphics[width=\linewidth]{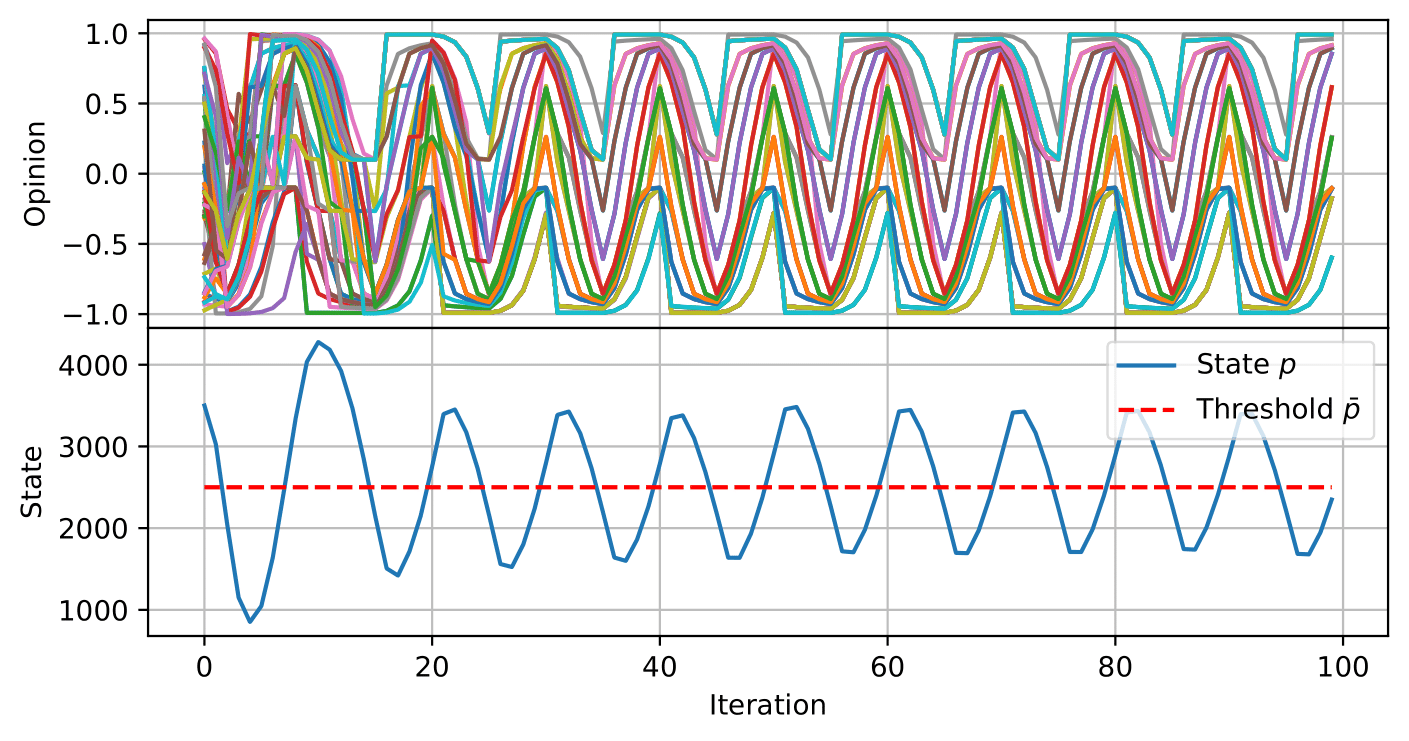}
    \caption{Depiction of Dynamical Evolution in the 50 × 50 Square Lattice from Figure \ref{fig:beta_inf_05}: The upper panel showcases the trajectory of each agent's opinion over iterations, while the lower panel illustrates the corresponding state evolution. The red dashed line marks the state threshold.}
    \label{fig:beta_inf_05_state}
\end{figure}

\begin{figure*}[ht]
    \centering
     
    \includegraphics[width=\linewidth]{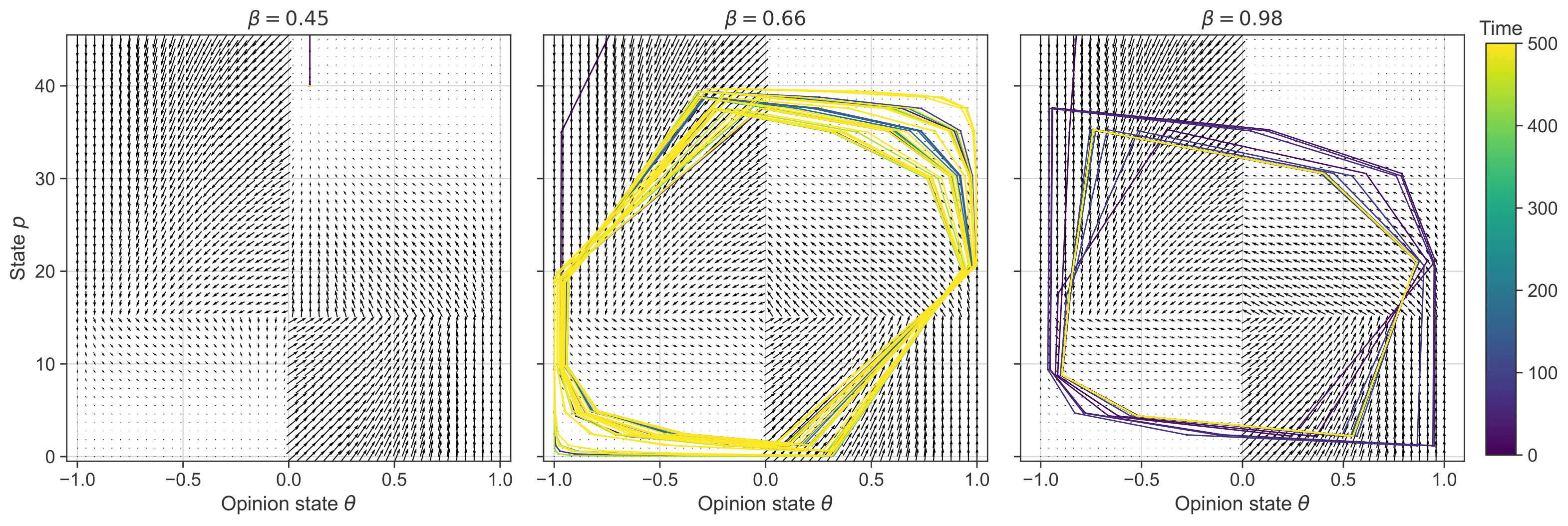}

    \caption{Trajectory dynamics. Left: two possible equilibria. Center: no equilibrium or limit cycle. Right: limit cycle.}
    \label{fig:asymptotic_cases}
\end{figure*}
Our study visualizes results with interactions based on a square lattice topology. In Figure \ref{fig:beta_inf_05}, we note the persistence of resilient clusters even after numerous iterations. The evolution of opinions and the corresponding state through iterations is depicted in Figure \ref{fig:beta_inf_05_state}. We can observe that opinions and state converge fast to a limit cycle. On Figure \ref{fig:beta_inf_05}, we see that the opinions are polarized on the graph. There are many robust clusters for both action 1 and -1. Each of them is separated by a frontier that seem to have the same length between the clusters of opposite action and them. We can see on Figure \ref{fig:beta_inf_05_state} that there is no agent with a constant opinion, the opinions are on a limit cycle. The ones that stay
with a constant action form robust clusters, as illustrated in Figure \ref{fig:beta_inf_05}.

\subsection{Complete graph with FS}

In Figures \ref{fig:asymptotic_cases}, we identify three unique behaviors displayed by the dynamical systems: stable equilibrium (when $\beta<0.5$), chaotic patterns, and a collection of limit cycles. These simulations were conducted on a complete graph comprising 20 nodes, all of which possess the FS property at initial time. The starting opinion is set at $\opinion{}(0) = 0.4$, and the initial state is taken at $\state(0)=100$, with a threshold of $\statethreshold = 15$. For the emission dynamics, the range is between $\emin = 0$ and $\emax = 1$, and the decay rate is defined by $\gamma = 0.5$. We represent the discrete trajectory linking each consecutive couple $(\opiniont{}, \statet)$ by a straight line. The color of the line represents the time when this shift occurs. Moreover, we present the subsequent vector field of the dynamics illustrated by the quivers. The speed of the dynamics is given by the length of the arrows. 

A quick observation reveals that, based on the given parameters, the dynamics of the opinion and state undergo significant variations in response to the value of $\beta$. For instances when $\beta <0.5$, both the opinion and state quickly stabilize at an equilibrium. However, as depicted in Figure \ref{fig:bifurcationdiagrambetaopinionwithconsensus}, given the same conditions, when $\beta >0.5$ we discern two potential behaviors. The first sees $\beta$ positioned outside all limit cycle intervals, resulting in chaotic behavior as illustrated in the center of Figure \ref{fig:asymptotic_cases}. The second showcases a limit cycle, as depicted in the right of Figure \ref{fig:asymptotic_cases}. Specifically, in the Chaos case of Figure \ref{fig:asymptotic_cases}, the sequence $\left( (\opiniont{}, \statet) \right)_{k \geq 0 }$ never reiterates, thus forming its unique pattern. Conversely, when $\beta$ falls within a limit cycle interval, the sequence $\left( (\opiniont{}, \statet) \right)_{k \geq 0 }$ converge towards a limit cycle.

\section{Conclusion}\label{sec:ccl}
In this paper, we have introduced and analyzed a CODA model coupled with an external dynamic state. Specifically, we consider that the external dynamics represent a very simple pollution model in which the emission level depends on the actions of the individuals in the social network. Conversely, the opinions are both influenced by the actions of the neighbors and the pollution level (above or below a given threshold). We have shown that different behaviors are possible, ranging from convergence to a steady state to chaotic behavior of the coupled dynamics. Numerical examples illustrate our theoretical results. 

\bibliographystyle{IEEEtran}
\bibliography{CODApaper}

\begin{thebibliography}{10}
\providecommand{\url}[1]{#1}
\csname url@rmstyle\endcsname
\providecommand{\newblock}{\relax}
\providecommand{\bibinfo}[2]{#2}
\providecommand\BIBentrySTDinterwordspacing{\spaceskip=0pt\relax}
\providecommand\BIBentryALTinterwordstretchfactor{4}
\providecommand\BIBentryALTinterwordspacing{\spaceskip=\fontdimen2\font plus
\BIBentryALTinterwordstretchfactor\fontdimen3\font minus
  \fontdimen4\font\relax}
\providecommand\BIBforeignlanguage[2]{{%
\expandafter\ifx\csname l@#1\endcsname\relax
\typeout{** WARNING: IEEEtran.bst: No hyphenation pattern has been}%
\typeout{** loaded for the language `#1'. Using the pattern for}%
\typeout{** the default language instead.}%
\else
\language=\csname l@#1\endcsname
\fi
#2}}

\bibitem{IsingThesis1924}
E.~Ising, ``Contribution to the theory of ferromagnetism,'' Ph.D. dissertation,
  University of Hamburg, 1924.

\bibitem{CliffordSudbury1973}
P.~Clifford and A.~Sudbury, ``A model for spatial conflict,''
  \emph{Biometrika}, vol.~60, no.~3, pp. 581--588, 1973.

\bibitem{Sznajd2000}
K.~Sznajd-Weron and J.~Sznjad, ``Opinion evolution in closed community,''
  \emph{International Journal of Modern Physics C}, vol.~11, no.~6, pp.
  1157--1165, 2000.

\bibitem{VCM2020}
M.~A. V.S.~Varma, I.C.~Mor\u{a}rescu, ``Analysis of opinion dynamics under
  binary exogenous and endogenous signals,'' \emph{Nonlinear Analysis: Hybrid
  Systems}, vol.~38, p. 100910, 2020.

\bibitem{Deffuant2000}
G.~Deffuant, D.~Neau, F.~Amblard, and G.~Weisbuch, ``Mixing beliefs among
  interacting agents,'' \emph{Advances in Complex Systems}, vol.~3, pp. 87--98,
  2000.

\bibitem{krause2002}
R.~Hegselmann and U.~Krause, ``Opinion dynamics and bounded confidence models,
  analysis, and simulation,'' \emph{Journal of Artificial Societies and Social
  Simulation}, vol.~5, no.~3, 2002.

\bibitem{Friedkin}
N.~E. Friedkin and E.~C. Johnsen., ``Social influence and opinions.''
  \emph{Journal of Mathematical Sociology.}, vol.~15, pp. 193--206, 1990.

\bibitem{MG10}
I.-C. Mor\u{a}rescu and A.~Girard, ``Opinion dynamics with decaying confidence:
  Application to community detection in graphs,'' \emph{IEEE Transactions on
  Automatic Control}, vol.~56, no.~8, pp. 1862 -- 1873, 2011.

\bibitem{DeGroot}
M.~H. DeGroot, ``Reaching a consensus,'' \emph{Journal of the American
  Statistical Association}, vol.~69, no. 345, pp. 118--121, 1974.

\bibitem{Altafini}
C.~Altafini, ``Consensus problems on networks with antagonistic interactions,''
  \emph{IEEE Transactions on Automatic Control}, vol.~58, no.~4, pp. 935--946,
  2013.

\bibitem{Martins2008}
A.~Martins, ``Continuous opinions and discrete actions in opinion dynamics
  problems,'' \emph{International Journal of Modern Physics C}, vol.~19, no.~4,
  pp. 617--625, 2008.

\bibitem{NilCoSa2016}
N.~R. Chowdhury, I.-C. Mor\u{a}rescu, S.~Martin, and S.~Srikant, ``Continuous
  opinions and discrete actions in social networks: a multi-agent system
  approach,'' in \emph{Proceedings 55th IEEE Conference on Decision and
  Control}, 2016, pp. 1739--1744.

\bibitem{CeragioliFrascaECC2015}
F.~Ceragioli and P.~Frasca, ``Continuous-time consensus dynamics with quantized
  all-to-all communication,'' in \emph{Proceedings of European Control
  Conference}, 2015.

\bibitem{memonEffectsBuildingAspect2010}
R.~A. Memon, D.~Y.~C. Leung, and C.-H. Liu, ``Effects of building aspect ratio
  and wind speed on air temperatures in urban-like street canyons,''
  \emph{Building and Environment}, vol.~45, no.~1, pp. 176--188, Jan. 2010.

\bibitem{berkowiczOSPMParameterisedStreet2000}
R.~Berkowicz, ``{{OSPM}} - {{A Parameterised Street Pollution Model}},''
  \emph{Environmental Monitoring and Assessment}, vol.~65, no.~1, pp. 323--331,
  Nov. 2000.

\bibitem{bakerStudyDispersionTransport2004}
J.~Baker, H.~L. Walker, and X.~Cai, ``A study of the dispersion and transport
  of reactive pollutants in and above street canyons\textemdash a large eddy
  simulation,'' \emph{Atmospheric Environment}, vol.~38, no.~39, pp.
  6883--6892, Dec. 2004.

\bibitem{gargComprehensiveStudyImpact2021}
A.~Garg, A.~Kumar, and N.~C. Gupta, ``Comprehensive study on impact assessment
  of lockdown on overall ambient air quality amid {{COVID-19}} in {{Delhi}} and
  its {{NCR}}, {{India}},'' \emph{Journal of Hazardous Materials Letters},
  vol.~2, p. 100010, Nov. 2021.

\end{thebibliography}

\end{document}